\definecolor{myViolet}{RGB}{97, 30, 83}
\definecolor{myRed}{RGB}{203, 26, 79}
\definecolor{myOrange}{RGB}{245, 136, 95}
\newtheorem{remark}{Remark}
\newtheorem{example}{Example}
\newcommand{\abstractProblemFunction}{g}
\newcommand{\timeVariable}{t}
\newcommand{\basicBrickSolver}{\vectorial{\psi}}
\newcommand{\timeStep}{\Delta \timeVariable}
\newcommand{\perturbationVectorField}{d}
\newcommand{\bigO}[1]{\mathcal{O}(#1)}
\newcommand{\globalScheme}{\vectorial{\phi}}
\newcommand{\coefficientOuterStep}{\alpha}
\newcommand{\coefficientInnerStep}{\beta}
\newcommand{\relatives}{\mathbb{Z}}
\newcommand{\indicesSpace}{j}
\newcommand{\numberOuterSteps}{n}
\newcommand{\matricial}[1]{\bm{#1}}
\newcommand{\discrete}[1]{\mathsf{#1}}
\newcommand{\vectorial}[1]{\bm{#1}}
\newcommand{\transport}{\matricial{\discrete{T}}}
\newcommand{\relaxation}[1]{\matricial{\discrete{R}}_{#1}}
\newcommand{\reals}{\mathbb{R}}
\newcommand{\spaceVariable}{x}
\newcommand{\relaxationParameter}{\omega}
\newcommand{\conservedMomentsSystemConservationLaws}{u}
\newcommand{\fluxSystemConservationLaws}{\varphi}
\newcommand{\numberConservationLaws}{M}
\newcommand{\nonConservedMomentsSystemConservationLaws}{v}
\newcommand{\energyNonConservedMomentsSystemConservationLaws}{w}
\newcommand{\relaxationTime}{\epsilon}
\newcommand{\kineticVelocity}{V}
\newcommand{\splittingTimeStep}{\Delta \timeVariable}
\newcommand{\spatialDimensionality}{d}
\newcommand{\distributionFunctionLetter}{f}
\newcommand{\atEquilibrium}{\textnormal{eq}}
\newcommand{\spaceStep}{\Delta \spaceVariable}
\newcommand{\indiceSpace}{k}
\newcommand{\identityOperator}{\matricial{\discrete{Id}}}
\newcommand{\idEst}{\emph{i.e.}}
\newcommand{\finalTime}{T}
\newcommand{\lbmSchemeVectorial}[3]{$\text{D}_{#1}\text{Q}_{#2}^{#3}$}
\newcommand{\canonicalBasisVector}{\vectorial{e}}
\newcommand{\transpose}[1]{#1^T}
\newcommand{\spaceLattice}[1]{\spaceStep\relatives^{#1}}
\newcommand{\numberShiftsTransport}{\kappa}
\newcommand{\naturalsWithoutZero}{\mathbb{N}^{*}}
\newcommand{\rationals}{\mathbb{Q}}
\newcommand{\transportVelocity}{a}
\newcommand{\timeShiftOperator}{z}
\newcommand{\determinant}{\text{det}}
\newcommand{\fourierTransformed}[1]{\hat{#1}}
\newcommand{\trace}{\text{tr}}
\newcommand{\frequency}{\xi}
\newcommand{\entropy}{S}
\newcommand{\entropyFlux}{{G}}
\newcommand{\indicesVelocity}{k}
\newcommand{\numberVelocities}{q}
\newcommand{\integerInterval}[2]{\llbracket #1, #2 \rrbracket}
\newcommand{\microscopicEntropyLetter}{\Sigma}
\newcommand{\heightShallowWater}{h}
\newcommand{\velocityShallowWater}{u}
\newcommand{\gravityShallowWater}{g}
\newcommand{\confer}{\emph{cf.}}
\newcommand{\densityEuler}{\rho}
\newcommand{\velocityXEuler}{u}
\newcommand{\velocityYEuler}{v}
\newcommand{\energyEuler}{E}
\newcommand{\pressureEuler}{p}
\newcommand{\gasConstant}{\gamma}
\newcommand{\greatersim}{\raisebox{-0.13cm}{~\shortstack{$>$ \\[-0.07cm]$\sim$}}~}
\newtheorem{proposition}{Proposition}%
\author{T. Bellotti, P. Helluy, L. Navoret}
\newcommand{\TheTitle}{Fourth-order entropy-stable lattice Boltzmann schemes  for hyperbolic systems}
\title{{\TheTitle}}
\author{
  Thomas Bellotti\thanks{IRMA, Universit\'e de Strasbourg, 67000 Strasbourg, France.}
  \and
  Philippe Helluy\footnotemark[1]
  \and
  Laurent Navoret\footnotemark[1]
}
\begin{document}

\maketitle

\begin{abstract}
    We present a novel framework for the development of fourth-order lattice Boltzmann schemes to tackle multidimensional nonlinear systems of conservation laws. 
    As for other numerical schemes for hyperbolic problems, high-order accuracy applies only to smooth solutions.
    Our numerical schemes preserve two fundamental characteristics inherent in classical lattice Boltzmann methods: a local relaxation phase and a transport phase composed of elementary shifts on a Cartesian grid.
    Achieving fourth-order accuracy is accomplished through the composition of second-order time-symmetric basic schemes utilizing rational weights. This enables the representation of the transport phase in terms of elementary shifts. Introducing local variations in the relaxation parameter during each stage of relaxation ensures entropy stability of the schemes. This not only enhances stability in the long-time limit but also maintains fourth-order accuracy.
    To validate our approach, we conduct comprehensive testing on scalar equations and systems in both one and two spatial dimensions. 
\end{abstract}

\begin{keywords}
lattice Boltzmann, fourth-order, hyperbolic systems of conservation laws, entropy
\end{keywords}
    
\begin{amsCat}
    76M28, 65M99, 65M12
\end{amsCat}


\section*{Introduction}

Lattice Boltzmann schemes \cite{higuera1989boltzmann} have gained acclaim for their computational efficiency and ease of use on modern computer architectures (\emph{e.g.} GPUs), owing to their distinctive structure, comprising a local collision/relaxation phase and a linear transport phase. The latter is constructed through shifts of data on a regular Cartesian grid. Despite their recent application in simulating non-linear systems of conservation laws \cite{dubois2014simulation, graille2014approximation, drui2019analysis,bellotti2022multiresolution,  bellotti2022multidimensional, baty:hal-02965967, guillon:hal-03986533}, these methods exhibit lower accuracy for such problems compared to more conventional approaches like Finite Volume and Discontinuous Galerkin methods.

While the attainment of second-order accuracy in lattice Boltzmann schemes is well-understood, achieved by setting relaxation parameters to two \cite{dellar2013interpretation, dubois2022nonlinear, bellotti2023truncation}, obtaining third and fourth-order accuracy proves to be a more intricate challenge \cite{dubois2022nonlinear}. The ability to increase the order is not guaranteed \emph{a priori}---especially for non-linear equations---and depends on the specific lattice Boltzmann scheme in use. When possible, higher accuracy is attained by delicately tuning equilibria that do not contribute to consistency at the leading order, a process that can be complex. Moreover, it is challenging to ascertain the stability of the scheme under such modifications. Consequently, the only fourth-order schemes identified so far only address linear scalar equations in 1D \cite{chen2023fourth, bellotti2023influence}, with minimal practical significance, the linear diffusion equation in 2D \cite{chen2024general}, and a specific kind of coupled Burgers' equations \cite{chen2023cole}.
Finally, a third-order time-accurate / sixth-order space-accurate scheme \cite{lin2021multiple} exists to tackle 1D linear diffusion equations.

This contribution aims at establishing a comprehensive framework for constructing fourth-order kinetic schemes for non-linear systems of multi-dimensional conservation laws. To achieve this objective, a departure from standard lattice Boltzmann schemes is necessary. Nevertheless, the numerical schemes to be developed maintain the two keys to success of any standard lattice Boltzmann scheme, namely the locality of the collision phase and a transport phase made up of simple shifts, thus retain their notorious efficiency, compared to standard kinetic schemes \cite{lafitte2017high, abgrall2023arbitrarily}.
The essential idea to increase the order of the schemes is to allow both forward and backward steps in time.

In the whole paper, we deal mostly with smooth solutions: the development of limiting procedures in the context of lattice Boltzmann schemes---a recently emerging topic \cite{kozhanova4755400hybrid}---remains an open and fundamental question worth thorough discussions that we do not address in the present contribution.
Matching the discussion concerning limiters for Finite Volume schemes, limiters can be divided into \emph{a priori} limiters, such as slope limiters, see \cite[Chapter 6]{leveque2002finite} for a general overview, and \emph{a posteriori} ones, \emph{e.g.} \cite{clain2011high}.
The latter correct the solution after a tentative first guess with pathologies has been computed.
Ignoring limiters, the present contribution aims at being a proof of concept of a new way to construct high-order lattice Boltzmann schemes for general hyperbolic problems.

The paper is organized as follows. In \Cref{sec:targetAndRelaxation}, we introduce the system of conservation laws addressed in this paper, along with its relaxation approximation, facilitating the handling of non-linearity. \Cref{sec:kineticSchemes} outlines our numerical strategy, beginning with a conventional lattice Boltzmann scheme, followed by a time-symmetrization step \cite{coulette2019high}, and eventual composition to achieve fourth-order accuracy \cite{mclachlan2002splitting}. We finish on brief considerations about stability, specifically in terms of the $L^2$ norm in a basic scalar linear setting.
A first batch of numerical experiments, presented in \Cref{sec:experimentsOrder}, empirically confirms the theoretical predictions.
In \Cref{sec:entropyStability}, we introduce a method for adjusting the relaxation parameter to ensure entropy stability for the numerical scheme.
The need for this procedure and the fact that it does not alter the order of the scheme are studied by means of new numerical experiments in \Cref{sec:experimentsEntropy}.
Finally, in \Cref{sec:Variations}, we propose, study, and test several variations on our fourth-order scheme.
We eventually conclude in \Cref{sec:conclusions}.

\section{Target system of conservation laws and relaxation approximation}\label{sec:targetAndRelaxation}

\subsection{Target system of conservation laws}

We aim at approximating the solution of the system on $\vectorial{\conservedMomentsSystemConservationLaws} : \reals \times \reals^{\spatialDimensionality} \to \reals^{\numberConservationLaws}$:
\begin{equation}\label{eq:systemConservationLaws}
    \partial_{\timeVariable} \vectorial{\conservedMomentsSystemConservationLaws} + \sum_{\indicesSpace = 1}^{\spatialDimensionality} \frac{\partial}{\partial \spaceVariable_{\indicesSpace}}  \vectorial{\fluxSystemConservationLaws}^{\indicesSpace}(\vectorial{\conservedMomentsSystemConservationLaws}) = 0,
\end{equation}
where $\vectorial{\fluxSystemConservationLaws}^{\indicesSpace}: \reals^{\numberConservationLaws} \to \reals^{\numberConservationLaws}$ for $\indicesSpace \in \integerInterval{1}{\spatialDimensionality}$ are smooth and possibly non-linear fluxes, see \cite{godlewski2013numerical}.
To give a simple example, taking $\spatialDimensionality = 1$, $\numberConservationLaws = 1$,  and $\fluxSystemConservationLaws(\conservedMomentsSystemConservationLaws) = \tfrac{1}{2}\conservedMomentsSystemConservationLaws^2$ yields the inviscid Burgers' equation.
We assume that \eqref{eq:systemConservationLaws} admits a Lax entropy--entropy fluxes pair $(\entropy, {\entropyFlux}^1, \dots, {\entropyFlux}^{\spatialDimensionality})$, with $\entropy : \reals^{\numberConservationLaws}\to \reals$ and ${\entropyFlux}^{\indicesSpace} : \reals^{\numberConservationLaws} \to \reals$ for $\indicesSpace \in \integerInterval{1}{\spatialDimensionality}$, such that ${\nabla_{\vectorial{\conservedMomentsSystemConservationLaws}} \vectorial{\fluxSystemConservationLaws}}^{\indicesSpace} \nabla_{\vectorial{\conservedMomentsSystemConservationLaws}} \entropy  = \nabla_{\vectorial{\conservedMomentsSystemConservationLaws}} \entropyFlux^{\indicesSpace}$ with $\entropy$ convex.
Further properties on this construction can be found in \cite{bouchut1999construction, bouchut2003entropy}.

\subsection{Relaxation systems}

In order to isolate the non-linearity of the fluxes appearing in \eqref{eq:systemConservationLaws} into a local relaxation term which is easily tractable, we consider the following discrete-velocity BGK relaxation systems \cite{bouchut1999construction, aregba2000discrete, bouchut2003entropy} on the distribution functions $\vectorial{\distributionFunctionLetter}_{1}, \dots, \vectorial{\distributionFunctionLetter}_{\numberVelocities} : \reals \times \reals^{\spatialDimensionality} \to \reals^{\numberConservationLaws}$, under the form 
\begin{equation}\label{eq:relaxationKineticSystem}
    \partial_{\timeVariable} \vectorial{\distributionFunctionLetter}_{\indicesVelocity} + \sum_{\indicesSpace = 1}^{\spatialDimensionality} \kineticVelocity_{\indicesVelocity}^{\indicesSpace} \frac{\partial \vectorial{\distributionFunctionLetter}_{\indicesVelocity}}{\partial\spaceVariable_{\indicesSpace}} = -\frac{1}{\relaxationTime} (\vectorial{\distributionFunctionLetter}_{\indicesVelocity} - \vectorial{\distributionFunctionLetter}^{\atEquilibrium}_{\indicesVelocity}(\vectorial{\conservedMomentsSystemConservationLaws} )), \qquad \indicesVelocity \in \integerInterval{1}{\numberVelocities}.
\end{equation}
Here, $\numberVelocities \geq 2$ is the number of discrete velocities, which are $\vectorial{\kineticVelocity}_{\indicesVelocity} \in \reals^{\spatialDimensionality}$, $\vectorial{\conservedMomentsSystemConservationLaws} = \sum_{\indicesVelocity = 1}^{\indicesVelocity = \numberVelocities} \vectorial{\distributionFunctionLetter}_{\indicesVelocity}$, and we indicate the relaxation time by $\relaxationTime > 0$.
Moreover, the equilibria $\vectorial{\distributionFunctionLetter}^{\atEquilibrium}_{\indicesVelocity}$ are non-linear functions of $\vectorial{\conservedMomentsSystemConservationLaws} $,and fulfill the compatibility relations 
\begin{equation}\label{eq:constraintsKinetic}
    \vectorial{\conservedMomentsSystemConservationLaws} = \sum_{\indicesVelocity = 1}^{\numberVelocities} \vectorial{\distributionFunctionLetter}^{\atEquilibrium}_{\indicesVelocity}(\vectorial{\conservedMomentsSystemConservationLaws}), \qquad \vectorial{\fluxSystemConservationLaws}^{\indicesSpace}(\vectorial{\conservedMomentsSystemConservationLaws}) = \sum_{\indicesVelocity = 1}^{\numberVelocities}{\kineticVelocity}_{\indicesVelocity}^{\indicesSpace} \vectorial{\distributionFunctionLetter}^{\atEquilibrium}_{\indicesVelocity}(\vectorial{\conservedMomentsSystemConservationLaws}), \quad \indicesSpace \in \integerInterval{1}{\spatialDimensionality},
\end{equation}
under which, the formal limit $\relaxationTime \to 0^+$ gives that $\vectorial{\distributionFunctionLetter}_{\indicesVelocity} \approx \vectorial{\distributionFunctionLetter}^{\atEquilibrium}_{\indicesVelocity}$ and that the sum of the distribution functions under \eqref{eq:relaxationKineticSystem} approximates \eqref{eq:systemConservationLaws}, see \cite{aregba2000discrete,lafitte2017high}.

\newcommand{\kineticEntropy}[1]{s_{#1}}
\newcommand{\legendreTransformed}[1]{#1^{*}}
\newcommand{\almostLegendreTransformedSymbol}{\star}
\newcommand{\almostLegendreTransformed}[1]{#1^{\almostLegendreTransformedSymbol}}
\newcommand{\kineticEntropyLegendre}[1]{\legendreTransformed{s}_{#1}}

\newcommand{\dualVariable}{p}

For future use, we introduce the microscopic entropy \cite{dubois2013stable}, given by the sum of the kinetic entropies: $\microscopicEntropyLetter(\vectorial{\distributionFunctionLetter}_{1}, \dots, \vectorial{\distributionFunctionLetter}_{\indicesVelocity}) =  \sum_{\indicesVelocity = 1}^{\indicesVelocity = \numberVelocities} \kineticEntropy{\indicesVelocity}(\vectorial{\distributionFunctionLetter}_{\indicesVelocity})$, where the kinetic entropies $\kineticEntropy{1}, \dots, \kineticEntropy{\numberVelocities} : \reals^{\numberConservationLaws} \to \reals$ are convex functions of their argument under the so-called characteristic condition.
We also have
\begin{equation}\label{eq:entropyDefinitionMinimization}
    \entropy(\vectorial{\conservedMomentsSystemConservationLaws}) = \min_{\vectorial{\conservedMomentsSystemConservationLaws} = \sum_{\indicesVelocity = 1}^{\indicesVelocity = \numberVelocities} \vectorial{\distributionFunctionLetter}_{\indicesVelocity}} \microscopicEntropyLetter(\vectorial{\distributionFunctionLetter}_{1}, \dots, \vectorial{\distributionFunctionLetter}_{\indicesVelocity}) = \microscopicEntropyLetter(\vectorial{\distributionFunctionLetter}_{1}^{\atEquilibrium}(\vectorial{\conservedMomentsSystemConservationLaws}), \dots, \vectorial{\distributionFunctionLetter}_{\indicesVelocity}^{\atEquilibrium}(\vectorial{\conservedMomentsSystemConservationLaws})),
\end{equation}
meaning that the entropy $\entropy$ stems from a constrained optimization of the microscopic entropy $\microscopicEntropyLetter$, and that the minimum is reached on the equilibrium.
Furthermore, equation \eqref{eq:entropyDefinitionMinimization} tells us that the entropy is an inf-convolution of the kinetic entropies \cite{guillon:hal-03986533}, which translates---thanks to the Legendre-Fenchel transform \cite{Rockafellar+1970} that we shall indicate by a $\legendreTransformed{}$---into $\legendreTransformed{\entropy} = \sum_{\indicesVelocity = 1}^{\indicesVelocity = \numberVelocities} \kineticEntropyLegendre{\indicesVelocity}$.
We also additionally request that \cite{dubois2013stable}
\begin{equation}\label{eq:transformationEntropyFlux}
    {{\entropyFlux}}^{\indicesSpace, \almostLegendreTransformedSymbol} := \vectorial{\fluxSystemConservationLaws}^{\indicesSpace}(\vectorial{\conservedMomentsSystemConservationLaws}(\vectorial{\dualVariable})) \cdot \vectorial{\dualVariable} - {\entropyFlux}^{\indicesSpace}(\vectorial{\conservedMomentsSystemConservationLaws}(\vectorial{\dualVariable})) = \sum_{\indicesVelocity = 1}^{\numberVelocities} {\kineticVelocity}_{\indicesVelocity}^{\indicesSpace}  \kineticEntropyLegendre{\indicesVelocity}, \quad \indicesSpace \in \integerInterval{1}{\spatialDimensionality},
\end{equation}
where $\vectorial{\dualVariable} = \nabla_{\vectorial{\conservedMomentsSystemConservationLaws}}\entropy$ is the conjugate variable. 
We here warn the readers about the difference between Legendre-Fenchel--transformed quantities, denoted by the symbol $\legendreTransformed{}$, and the notation employing a ${ }^{\almostLegendreTransformedSymbol}$ in \eqref{eq:transformationEntropyFlux}.

Let us introduce the choices of discrete velocities and relaxation systems that we address in the paper.

    \subsubsection{$\spatialDimensionality = 1$: One-dimensional problems}
    We consider a two-velocities model, so we set $\numberVelocities = 2$, having $\kineticVelocity_1 = \kineticVelocity > 0$ ( indexed by $+$, for the associated distribution function is transported in the positive direction) and $\kineticVelocity_2 = -\kineticVelocity < 0$ (indexed by $-$).
    The only way of fulfilling \eqref{eq:constraintsKinetic} is to select the following equilibria:
    \begin{equation*}
        \vectorial{\distributionFunctionLetter}_{\pm}^{\atEquilibrium}(\vectorial{\conservedMomentsSystemConservationLaws} ) = \frac{1}{2} \vectorial{\conservedMomentsSystemConservationLaws} \pm \frac{1}{2\kineticVelocity} \vectorial{\fluxSystemConservationLaws}(\vectorial{\conservedMomentsSystemConservationLaws}).
    \end{equation*}
    Using the change of basis $\vectorial{\conservedMomentsSystemConservationLaws} = \vectorial{\distributionFunctionLetter}_+ + \vectorial{\distributionFunctionLetter}_-$ and $\vectorial{\nonConservedMomentsSystemConservationLaws} = \kineticVelocity (\vectorial{\distributionFunctionLetter}_+ - \vectorial{\distributionFunctionLetter}_-)$, \eqref{eq:relaxationKineticSystem} can be recast as 
    \begin{equation}\label{eq:relaxationSystem1D}
        \begin{cases}
            \partial_{\timeVariable} \vectorial{\conservedMomentsSystemConservationLaws} + \partial_{\spaceVariable} \vectorial{\nonConservedMomentsSystemConservationLaws} = 0, \\
            \partial_{\timeVariable} \vectorial{\nonConservedMomentsSystemConservationLaws} + \kineticVelocity^2 \partial_{\spaceVariable} \vectorial{\conservedMomentsSystemConservationLaws} = -\frac{1}{\relaxationTime}(\vectorial{\nonConservedMomentsSystemConservationLaws} - \vectorial{\fluxSystemConservationLaws}(\vectorial{\conservedMomentsSystemConservationLaws})),
        \end{cases}
    \end{equation}
    being the well-known Jin-Xin relaxation system \cite{jin1995relaxation}.
    In the lattice Boltzmann nomenclature, this is a \lbmSchemeVectorial{1}{2}{\numberConservationLaws} relaxation system \cite{graille2014approximation}.

    Let us provide a few examples of problems that can be tackled using this scheme.

    \begin{example}[Linear transport equation]\label{eq:linearTransportEquation}
        Let $\numberConservationLaws = 1$ and $\fluxSystemConservationLaws(\conservedMomentsSystemConservationLaws) = \transportVelocity\conservedMomentsSystemConservationLaws$.
        We consider the classic quadratic entropy given by $\entropy(\conservedMomentsSystemConservationLaws) = \frac{1}{2}\conservedMomentsSystemConservationLaws^2$, thus the entropy flux is given by $\entropyFlux(\conservedMomentsSystemConservationLaws) = \frac{\transportVelocity}{2} \conservedMomentsSystemConservationLaws^2$.
        We obtain $\legendreTransformed{\entropy}(\dualVariable)= \frac{1}{2} \dualVariable^2$ and $\almostLegendreTransformed{\entropyFlux}(\dualVariable)= \frac{\transportVelocity}{2} \dualVariable^2$ (observe that $\legendreTransformed{\entropyFlux}(\dualVariable)= \frac{1}{2\transportVelocity} \dualVariable^2$).
        The dual kinetic entropies satisfying the imposed constraints are 
        \begin{equation*}
            \kineticEntropyLegendre{\pm}(\dualVariable) = \frac{1}{4} \Bigl ( 1 \pm \frac{\transportVelocity}{\kineticVelocity}\Bigr )\dualVariable^2, \qquad \text{thus} \qquad \kineticEntropy{\pm}(\distributionFunctionLetter_{\pm}) = \frac{\kineticVelocity}{\kineticVelocity\pm \transportVelocity} (\distributionFunctionLetter_{\pm})^2.
        \end{equation*}
        The condition providing the convexity of the kinetic entropy is the  sub-characteristic condition $|\transportVelocity| < \kineticVelocity$ that is found in many works, see \cite{bouchut1999construction, aregba2000discrete}.
    \end{example}
    \begin{example}[Burger' equation]
        Let $\numberConservationLaws = 1$ and $\fluxSystemConservationLaws(\conservedMomentsSystemConservationLaws) = \tfrac{1}{2}\conservedMomentsSystemConservationLaws^2$.
        We take $\entropy(\conservedMomentsSystemConservationLaws) = \frac{1}{2}\conservedMomentsSystemConservationLaws^2$, thus the entropy flux is given by $\entropyFlux(\conservedMomentsSystemConservationLaws) = \frac{1}{3} \conservedMomentsSystemConservationLaws^3$.
        Analogously to the linear case, we obtain 
        \begin{equation*}
            \kineticEntropyLegendre{\pm}(\dualVariable) = \frac{1}{4} \dualVariable^2 \pm \frac{1}{12\kineticVelocity} \dualVariable^3, \qquad \kineticEntropy{\pm}(\distributionFunctionLetter_{\pm}) = \frac{\kineticVelocity^2}{6} \Bigl ( \Bigl (1 \pm \frac{4\distributionFunctionLetter_{\pm}}{\kineticVelocity} \Bigr )^{3/2} \mp  \frac{6\distributionFunctionLetter_{\pm}}{\kineticVelocity} - 1\Bigr ).
        \end{equation*}
        Again, convexity comes by $|\conservedMomentsSystemConservationLaws| < \kineticVelocity$.
    \end{example}
    \begin{example}[Shallow water system]
        Let $\numberConservationLaws = 2$, $\vectorial{\conservedMomentsSystemConservationLaws}={(\conservedMomentsSystemConservationLaws_1, \conservedMomentsSystemConservationLaws_2)} = {(\heightShallowWater, \heightShallowWater\velocityShallowWater)}$, and $\vectorial{\fluxSystemConservationLaws}(\heightShallowWater, \heightShallowWater\velocityShallowWater) = {(\heightShallowWater\velocityShallowWater, \heightShallowWater\velocityShallowWater^2 + \tfrac{\gravityShallowWater}{2}\heightShallowWater^2)}$ where $\gravityShallowWater > 0$ is the gravity acceleration.
        As in \cite[Section 3.2]{bouchut2004nonlinear}, we take $\entropy(\heightShallowWater, \heightShallowWater\velocityShallowWater) = \tfrac{1}{2}\heightShallowWater\velocityShallowWater^2 + \tfrac{\gravityShallowWater}{2}\heightShallowWater^2$, hence $\entropyFlux(\heightShallowWater, \heightShallowWater\velocityShallowWater) = \tfrac{1}{2}\heightShallowWater\velocityShallowWater^3 + \tfrac{\gravityShallowWater}{2}\heightShallowWater^2\velocityShallowWater$.
        We have the dual variables $\dualVariable_1 = -\tfrac{1}{2}\velocityShallowWater^2 + \gravityShallowWater \heightShallowWater$ and $\dualVariable_2 = \velocityShallowWater$ and the dual kinetic entropies given by 
        \begin{equation*}
            \kineticEntropyLegendre{\pm} (\vectorial{\dualVariable}) = \frac{(\kineticVelocity\pm \dualVariable_2)(2\dualVariable_1 + \dualVariable_2^2)^2}{16\gravityShallowWater\kineticVelocity}.
        \end{equation*}
        Convexity comes under the condition $\kineticVelocity > |\velocityShallowWater| + \sqrt{\gravityShallowWater\heightShallowWater}$, see \cite{guillon:hal-03986533}, which guarantees that the kinetic velocity is larger than the one of the fastest wave in the system.
        The kinetic entropies can be found analytically, albeit with complicated formul\ae: they are given by $\kineticEntropy{\pm}(\vectorial{\distributionFunctionLetter}_{\pm}) = \vectorial{\dualVariable}_{\pm}(\vectorial{\distributionFunctionLetter}_{\pm}) \cdot \vectorial{\distributionFunctionLetter}_{\pm} - \kineticEntropyLegendre{\pm}( \vectorial{\dualVariable}_{\pm}(\vectorial{\distributionFunctionLetter}_{\pm}))$, where $\vectorial{\dualVariable}_{\pm}(\vectorial{\distributionFunctionLetter}_{\pm})$ is the solution of $\vectorial{\distributionFunctionLetter}_{\pm} = \nabla_{\vectorial{\dualVariable}}\kineticEntropyLegendre{\pm}(\vectorial{\dualVariable}_{\pm})$.
        One can see that the first equation to solve is linear in $\dualVariable_{\pm}^1$, thus we obtain 
        \begin{equation*}
            \dualVariable_{\pm}^1(\dualVariable_{\pm}^2) = \frac{4\kineticVelocity\gravityShallowWater \distributionFunctionLetter_{\pm}^1 - \kineticVelocity (\dualVariable_{\pm}^2)^2 \mp (\dualVariable_{\pm}^2)^3}{2(\kineticVelocity \pm \dualVariable_{\pm}^2)},
        \end{equation*}
        which corresponds to a third-order equation on $\dualVariable_{\pm}^2$ only:
        \begin{equation*}
            \pm \kineticVelocity\gravityShallowWater(\distributionFunctionLetter_{\pm}^1)^2 + \distributionFunctionLetter_{\pm}^1 (\dualVariable_{\pm}^2)^3 - \kineticVelocity^2 \distributionFunctionLetter_{\pm}^2 \pm (2\kineticVelocity\distributionFunctionLetter_{\pm}^1 \mp \distributionFunctionLetter_{\pm}^2)(\dualVariable_{\pm}^2)^2 + (\kineticVelocity^2 \distributionFunctionLetter_{\pm}^1 \pm2\kineticVelocity\distributionFunctionLetter_{\pm}^2)\dualVariable_{\pm}^2 = 0.
        \end{equation*}
        This equation can be solved with the well-known formula for cubic equations, upon choosing a specific branch.
    \end{example}
    \subsubsection{$\spatialDimensionality = 2$: Two-dimensional problems}
    We consider a four-velocities model, so we set $\numberVelocities = 4$, having $\vectorial{\kineticVelocity}_1 = {(\kineticVelocity, 0)}$ (indexed by $+,x$) with $\kineticVelocity > 0$, $\vectorial{\kineticVelocity}_2 = {(0, \kineticVelocity)}$ (indexed by $+,y$), $\vectorial{\kineticVelocity}_3 = {(-\kineticVelocity, 0)}$ (indexed by $-,x$), and $\vectorial{\kineticVelocity}_4 = {(0, -\kineticVelocity)}$ (indexed by $-,y$).
    There are several ways of enforcing \eqref{eq:constraintsKinetic}: the one we select is \cite[Chapter 3]{fevrier:tel-01126994}
    \begin{equation*}
        \vectorial{\distributionFunctionLetter}^{\atEquilibrium}_{\pm, x/y}(\vectorial{\conservedMomentsSystemConservationLaws} ) = \frac{1}{4} \vectorial{\conservedMomentsSystemConservationLaws} \pm \frac{1}{2\kineticVelocity} \vectorial{\fluxSystemConservationLaws}_{x/y}(\vectorial{\conservedMomentsSystemConservationLaws}).
    \end{equation*}
    Using the change of basis $\vectorial{\conservedMomentsSystemConservationLaws} = \vectorial{\distributionFunctionLetter}_{+, x} + \vectorial{\distributionFunctionLetter}_{+, y} + \vectorial{\distributionFunctionLetter}_{-, x} + \vectorial{\distributionFunctionLetter}_{-, y}$, $\vectorial{\nonConservedMomentsSystemConservationLaws}_x = \kineticVelocity (\vectorial{\distributionFunctionLetter}_{+, x} - \vectorial{\distributionFunctionLetter}_{-, x} )$, $\vectorial{\nonConservedMomentsSystemConservationLaws}_y = \kineticVelocity (\vectorial{\distributionFunctionLetter}_{+, y} - \vectorial{\distributionFunctionLetter}_{-, y} )$, and $\vectorial{\energyNonConservedMomentsSystemConservationLaws} = \kineticVelocity^2(\vectorial{\distributionFunctionLetter}_{+, x} - \vectorial{\distributionFunctionLetter}_{+, y} + \vectorial{\distributionFunctionLetter}_{-, x} - \vectorial{\distributionFunctionLetter}_{-, y})$, we get
    \begin{equation}\label{eq:relaxationSystem2D}
        \begin{cases}
            \partial_{\timeVariable} \vectorial{\conservedMomentsSystemConservationLaws} + \partial_{x} \vectorial{\nonConservedMomentsSystemConservationLaws}_x + \partial_{y} \vectorial{\nonConservedMomentsSystemConservationLaws}_y = 0, \\
            \partial_{\timeVariable} \vectorial{\nonConservedMomentsSystemConservationLaws}_x + \kineticVelocity^2 \partial_{x} \bigl (\tfrac{1}{2} \vectorial{\conservedMomentsSystemConservationLaws} + \tfrac{1}{2\kineticVelocity^2} \vectorial{\energyNonConservedMomentsSystemConservationLaws}\bigr )= -\frac{1}{\relaxationTime}(\vectorial{\nonConservedMomentsSystemConservationLaws}_x - \vectorial{\fluxSystemConservationLaws}_x(\vectorial{\conservedMomentsSystemConservationLaws})), \\
            \partial_{\timeVariable} \vectorial{\nonConservedMomentsSystemConservationLaws}_y + \kineticVelocity^2 \partial_{y} \bigl (\tfrac{1}{2} \vectorial{\conservedMomentsSystemConservationLaws} - \tfrac{1}{2\kineticVelocity^2} \vectorial{\energyNonConservedMomentsSystemConservationLaws}\bigr )= -\frac{1}{\relaxationTime}(\vectorial{\nonConservedMomentsSystemConservationLaws}_y -  \vectorial{\fluxSystemConservationLaws}_y(\vectorial{\conservedMomentsSystemConservationLaws})), \\
            \partial_{\timeVariable} \vectorial{\energyNonConservedMomentsSystemConservationLaws} + \kineticVelocity^2 \partial_{x} \vectorial{\nonConservedMomentsSystemConservationLaws}_x - \kineticVelocity^2 \partial_{y} \vectorial{\nonConservedMomentsSystemConservationLaws}_y = -\frac{1}{\relaxationTime}\vectorial{\energyNonConservedMomentsSystemConservationLaws}.
        \end{cases}
    \end{equation}
    This can be called a \lbmSchemeVectorial{2}{4}{\numberConservationLaws} relaxation system \cite{dubois2014simulation}.
    \begin{example}[Linear transport equation]
        Let $\numberConservationLaws = 1$ and ${\fluxSystemConservationLaws}^1(\conservedMomentsSystemConservationLaws) = (\transportVelocity_x\conservedMomentsSystemConservationLaws)$, ${\fluxSystemConservationLaws}^2(\conservedMomentsSystemConservationLaws) = \transportVelocity_y\conservedMomentsSystemConservationLaws$.
        We consider $\entropy(\conservedMomentsSystemConservationLaws) = \frac{1}{2}\conservedMomentsSystemConservationLaws^2$, thus the entropy flux is given by ${\entropyFlux}(\conservedMomentsSystemConservationLaws) = (\frac{\transportVelocity_x}{2} \conservedMomentsSystemConservationLaws^2, \frac{\transportVelocity_y}{2} \conservedMomentsSystemConservationLaws^2)$.
        We obtain $\legendreTransformed{\entropy}(\dualVariable)= \frac{1}{2} \dualVariable^2$ and $\almostLegendreTransformed{{\entropyFlux}}(\dualVariable)= (\frac{\transportVelocity_x}{2} \dualVariable^2, \frac{\transportVelocity_y}{2} \dualVariable^2)$.
        Possible dual kinetic entropies satisfying the constraints are 
        \begin{equation*}
            \kineticEntropyLegendre{\pm, x/y}(\dualVariable) = \frac{1}{4} \Bigl ( \frac{1}{2} \pm \frac{\transportVelocity_{x/y}}{\kineticVelocity}\Bigr )\dualVariable^2, \qquad \text{thus} \qquad \kineticEntropy{\pm, x/y}(\distributionFunctionLetter_{\pm, x/y}) = \frac{2\kineticVelocity}{\kineticVelocity\pm 2\transportVelocity_{x/y}} (\distributionFunctionLetter_{\pm, x/y})^2.
        \end{equation*}
        The conditions providing the convexity of the kinetic entropies read $|\transportVelocity_{x/y}| < \kineticVelocity/2$, see \cite{guillon:hal-03986533}.
    \end{example}

Besides the specific choices of discrete velocities that we have presented hitherto, the techniques developed in the paper work as long as $\vectorial{\kineticVelocity}_{\indicesVelocity} \in \kineticVelocity \relatives^{\spatialDimensionality}$ for $\indicesVelocity \in \integerInterval{1}{\numberVelocities}$ with a given $\kineticVelocity \in \reals$.
For example, one could employ the well-known \lbmSchemeVectorial{2}{9}{} scheme \cite{lallemand2000theory}.

\newcommand{\strong}[1]{\textbf{#1}}

\section{Numerical schemes}\label{sec:kineticSchemes}

Now that we have set the preliminaries concerning relaxation systems at a continuous level, we are ready to propose several numerical schemes to tackle \eqref{eq:systemConservationLaws} inspired by \eqref{eq:relaxationKineticSystem}.

The first step is---in \Cref{sec:standardLBM}---the introduction of transport and relaxation phases, yielding the standard lattice Boltzmann scheme. 
This scheme can be easily made second-order accurate; however, it is hard to push it towards higher accuracy because the scheme lacks time-symmetry. The time-symmetry property is indeed useful for increasing the order of the scheme through palindromic composition \cite{mclachlan2002splitting}.
The second step in the process, presented in \Cref{sec:symmetrization}, is conducted by symmetrization, without increasing the actual order of the scheme.
The latter is the aim of the third step in the process and is obtained by composition, as detailed in \Cref{sec:fourthOrderSchemes}.

For the space discretization, we employ a uniform Cartesian mesh $\spaceLattice{\spatialDimensionality}$---also known as lattice---of step $\spaceStep > 0$.
The uniform time step is denoted by $\timeStep > 0$ and is specified in what follows.

\subsection{Standard lattice Boltzmann schemes}\label{sec:standardLBM}

The left-hand side of \eqref{eq:relaxationKineticSystem} is made up of linear transport equations with constant velocities $\vectorial{\kineticVelocity}_{\indicesVelocity}$, whereas the right-hand side represents local relaxations.
It is therefore natural to split these two terms and let them undergo different treatments.
In this part of the paper, we consider $\indicesVelocity \in \integerInterval{1}{\numberVelocities}$ be the index of any discrete velocity.

\subsubsection{Transport}
The equations associated with the left-hand side of \eqref{eq:relaxationKineticSystem} are solved using any consistent one-step scheme for the linear transport equation.
Recall that the kinetic velocities $\vectorial{\kineticVelocity}_{\indicesVelocity}$ are integer multiples of $\kineticVelocity$, which is adjusted so that the kinetic velocity $\kineticVelocity$ fulfills $\kineticVelocity \timeStep/\spaceStep = \numberShiftsTransport \in \naturalsWithoutZero$.
In this way, the transport phase is indeed given by elementary shifts on the grid, which is the natural issue of any consistent one-step scheme in this peculiar framework.
The fact of shifting data sticking to the discrete grid  makes our approach a lattice Boltzmann approach.
This reads 
\begin{equation}\label{eq:transportPhase}
    \vectorial{\distributionFunctionLetter}_{\indicesVelocity}(\timeStep, \vectorial{\spaceVariable}) = \vectorial{\distributionFunctionLetter}_{\indicesVelocity}(0, \vectorial{\spaceVariable} - \vectorial{\kineticVelocity}_{\indicesVelocity}\timeStep) = \vectorial{\distributionFunctionLetter}_{\indicesVelocity}(0, \vectorial{\spaceVariable} - \underbrace{\numberShiftsTransport \tfrac{\vectorial{\kineticVelocity}_{\indicesVelocity}}{\kineticVelocity}}_{\in \relatives^{\spatialDimensionality}}\spaceStep), \qquad \vectorial{\spaceVariable} \in \spaceLattice{\spatialDimensionality}.
\end{equation}
Gathering all the distribution functions together, this transport phase is denoted by $\transport(\timeStep)$.
This operator is made up of exact schemes for the transport equations; however, one must be aware that we have performed an overall splitting between transport and relaxation, thus this does not ensure accuracy with respect to the original problem \eqref{eq:relaxationKineticSystem}---and \emph{a fortiori} with \eqref{eq:systemConservationLaws}---above first-order.

\subsubsection{Relaxation}

The relaxation part, \idEst{} the right-hand side of \eqref{eq:relaxationKineticSystem}, is solved using a trapezoidal quadrature, see \cite{dellar2013interpretation}, which is second-order accurate.
Using the fact that the relaxation phase conserves $\vectorial{\conservedMomentsSystemConservationLaws}$ and thus any equilibrium fulfills $\vectorial{\distributionFunctionLetter}_{\indicesVelocity}^{ \atEquilibrium}(\vectorial{\conservedMomentsSystemConservationLaws}(\splittingTimeStep) ) = \vectorial{\distributionFunctionLetter}_{\indicesVelocity}^{\atEquilibrium}(\vectorial{\conservedMomentsSystemConservationLaws} (0))$, the algorithm can be fortunately kept explicit and thus reads
\begin{equation}\label{eq:relaxationDiscretization}
    \vectorial{\distributionFunctionLetter}_{\indicesVelocity}(\splittingTimeStep) = \frac{2\relaxationTime - \splittingTimeStep}{2\relaxationTime + \splittingTimeStep} \vectorial{\distributionFunctionLetter}_{\indicesVelocity}(0) + \frac{2\splittingTimeStep}{2\relaxationTime + \splittingTimeStep} \vectorial{\distributionFunctionLetter}^{ \atEquilibrium}_{\indicesVelocity}(\vectorial{\conservedMomentsSystemConservationLaws} (0)) \xrightarrow[\relaxationTime \to 0]{} - \vectorial{\distributionFunctionLetter}_{\indicesVelocity}(0) + 2 \vectorial{\distributionFunctionLetter}_{\indicesVelocity}^{ \atEquilibrium}(\vectorial{\conservedMomentsSystemConservationLaws} (0)).
\end{equation}
The space variable is not listed since the relaxation step is local and performed at each point of the spatial grid $\spaceLattice{\spatialDimensionality}$.
We consider the limit $\relaxationTime \to 0$ in \eqref{eq:relaxationDiscretization}, thus a relaxation independent of $\splittingTimeStep$.
Therefore, $\relaxationTime$ no longer appears in the numerical scheme. The relaxation system and its discretization must be seen as an intermediate step to propose a numerical scheme for another equation where no relaxation time exists, namely \eqref{eq:systemConservationLaws}.
More generally, a relaxation independent of $\timeStep$ can be written as 
\begin{equation}\label{eq:relaxationDiscretizationOmega}
    \vectorial{\distributionFunctionLetter}_{\indicesVelocity}(\splittingTimeStep) =  (1-\relaxationParameter) \vectorial{\distributionFunctionLetter}_{\indicesVelocity}(0) + \relaxationParameter \vectorial{\distributionFunctionLetter}_{\indicesVelocity}^{ \atEquilibrium}(\vectorial{\conservedMomentsSystemConservationLaws} (0)).
\end{equation}
with a relaxation parameter $\relaxationParameter \in (0, 2]$, and we indicate it by $\relaxation{\relaxationParameter}$.
Whenever we write $\relaxation{}$, we mean $\relaxation{\relaxationParameter = 2}$.
Notice that $\relaxation{\relaxationParameter =2}$ is an involution: $\relaxation{\relaxationParameter =2} \relaxation{\relaxationParameter =2} = \identityOperator$, which is false for relaxation parameters $\relaxationParameter < 2$.
This feature of the relaxation operator is crucial in what follows.

\subsubsection{Overall lattice Boltzmann scheme}

One can show that the scheme $\basicBrickSolver(\splittingTimeStep) = \relaxation{} \transport(\timeStep)$ (or $\basicBrickSolver(\splittingTimeStep) = \transport(\timeStep) \relaxation{}$) is a second-order scheme to solve \eqref{eq:systemConservationLaws}.
This boils down to the standard SRT (Single-Relaxation-Time) lattice Boltzmann scheme with relaxation parameter equal to two \cite{graille2014approximation}, which is second-order accurate, see \cite{dubois2022nonlinear, bellotti2023truncation}.

However, these schemes are not time-symmetric.
Time symmetry is defined by 
\begin{equation}\label{eq:symmetryCondition}
    \basicBrickSolver(\splittingTimeStep) \basicBrickSolver(-\splittingTimeStep) = \identityOperator \qquad \textnormal{and} \qquad \basicBrickSolver(0) = \identityOperator.
\end{equation} 
In the present case, $\basicBrickSolver(\splittingTimeStep)\basicBrickSolver(-\splittingTimeStep) \neq \identityOperator$, where whenever we employ negative time-steps, it is like if we simply reverse $\kineticVelocity \mapsto -\kineticVelocity$ using a positive time-step, \idEst{} the distribution functions are transported in the opposite direction compared to what they would do when $\kineticVelocity > 0$.
Time-symmetry is a highly desirable feature that fosters the increase of the order by using composition procedures, in the spirit of what \cite{mclachlan2002splitting} presents.
We now try to fix this problem.

\subsection{Symmetric lattice Boltzmann schemes}\label{sec:symmetrization}

The first idea is to use a sort of Strang formula that would read $\basicBrickSolver(\splittingTimeStep) = \transport  ( \frac{\splittingTimeStep}{2} ) \relaxation{} \transport  ( \frac{\splittingTimeStep}{2} )$.
Since the transport phase is made up of elementary shifts on the grid, we have that $\basicBrickSolver(\splittingTimeStep) \basicBrickSolver(-\splittingTimeStep) =\identityOperator$.
However, $\basicBrickSolver(0) \neq \identityOperator$, thus this operator is not suitable to be employed to increase the overall order of the numerical scheme.
Juxtaposing two half-steps of this operator---following \cite{coulette2019high}---we can take advantage of the involution property of the relaxation operator $\relaxation{}$, and gain 
\begin{equation}\label{eq:basicBrickSplitting}
    \basicBrickSolver(\splittingTimeStep) = \transport \Bigl ( \frac{\splittingTimeStep}{4}\Bigr ) \relaxation{} \transport \Bigl ( \frac{\splittingTimeStep}{4}\Bigr ) \transport \Bigl ( \frac{\splittingTimeStep}{4}\Bigr ) \relaxation{} \transport \Bigl ( \frac{\splittingTimeStep}{4}\Bigr ) =  \transport \Bigl ( \frac{\splittingTimeStep}{4}\Bigr ) \relaxation{} \transport \Bigl ( \frac{\splittingTimeStep}{2}\Bigr ) \relaxation{} \transport \Bigl ( \frac{\splittingTimeStep}{4}\Bigr ).
\end{equation}
One can easily check that, thanks to the fact that $\relaxation{}$ is an involution, we have \eqref{eq:symmetryCondition} hence $\basicBrickSolver$ defined through \eqref{eq:basicBrickSplitting} is time-symmetric.
So far, nothing special has been done to increase the order, so \eqref{eq:basicBrickSplitting} is just another second-order accurate solver, as the one provided by the  Strang formula $ \transport  ( \frac{\splittingTimeStep}{2} ) \relaxation{} \transport  ( \frac{\splittingTimeStep}{2} )$.

\subsection{Fourth-order lattice Boltzmann scheme}\label{sec:fourthOrderSchemes}

The symmetry property is crucial to obtain high-order schemes by composition. 
Let us assume that the symmetric lattice Boltzmann operator $\basicBrickSolver(\splittingTimeStep)$ defined in \eqref{eq:basicBrickSplitting} 
leads to a converging lattice Boltzmann scheme. In other words, this means that $\basicBrickSolver(\splittingTimeStep)$ is an approximation of the flow of a formal differential equation $(\vectorial{\distributionFunctionLetter}_1, \dots, \vectorial{\distributionFunctionLetter}_{\numberVelocities})'(t, \cdot) = \vectorial{\abstractProblemFunction}((\vectorial{\distributionFunctionLetter}_1, \dots, \vectorial{\distributionFunctionLetter}_{\numberVelocities})(t, \cdot))$.
In the Lie groups theory, \confer{} \cite[Introduction]{mclachlan2002splitting} and \cite[Chapter 2 and 3]{hall2013lie}, it is common to denote the flow of the differential equation by the exponential notation $(\vectorial{\distributionFunctionLetter}_1, \dots, \vectorial{\distributionFunctionLetter}_{\numberVelocities})(t, \cdot) = e^{\timeVariable\vectorial{\abstractProblemFunction}} ((\vectorial{\distributionFunctionLetter}_1, \dots, \vectorial{\distributionFunctionLetter}_{\numberVelocities})(0, \cdot))$. This is a generalization of the matrix exponential from the linear case.
With this, we have $\basicBrickSolver(\splittingTimeStep) \approx e^{\splittingTimeStep\vectorial{\abstractProblemFunction}}$.
In \cite{drui2019analysis, guillon:hal-03986533}, it is shown that $\sum_{\indicesVelocity = 1}^{\indicesVelocity=\numberVelocities} \vectorial{\abstractProblemFunction}_{\indicesVelocity} (\vectorial{\distributionFunctionLetter}_1, \dots, \vectorial{\distributionFunctionLetter}_{\numberVelocities})$ depends only on $\vectorial{\conservedMomentsSystemConservationLaws} = \sum_{\indicesVelocity = 1}^{\indicesVelocity = \numberVelocities} \vectorial{\distributionFunctionLetter}_{\indicesVelocity}$, and
\begin{equation*}
    \sum_{\indicesVelocity = 1}^{\numberVelocities} \vectorial{\abstractProblemFunction}_{\indicesVelocity}(\vectorial{\conservedMomentsSystemConservationLaws}) = - \sum_{\indicesSpace = 1}^{\spatialDimensionality} \frac{\partial}{\partial \spaceVariable_{\indicesSpace}}  \vectorial{\fluxSystemConservationLaws}^{\indicesSpace}(\vectorial{\conservedMomentsSystemConservationLaws}),
\end{equation*}
so that the scheme eventually solves \eqref{eq:systemConservationLaws}, plus other equations that can be made explicit.
It is possible to be more precise: by time-symmetry, using \cite[Theorem 19]{mclachlan2002splitting}, there exists a vector field $\vectorial{\perturbationVectorField}$ such that
\begin{equation*}
    \basicBrickSolver(\timeStep) = e^{\timeStep \vectorial{\abstractProblemFunction} + \timeStep^3 \vectorial{\perturbationVectorField}} + \bigO{\timeStep^5},
\end{equation*}
indicating that $\basicBrickSolver$ is second-order accurate.
Remark that there is no guarantee on the fact that $\vectorial{\abstractProblemFunction}$ and $\vectorial{\perturbationVectorField}$ commute.
Following \cite[Equation (4.4)]{mclachlan2002splitting}, we look for an overall operator---constructed by composition---under the form 
\begin{equation}\label{eq:splitting}
    \globalScheme(\timeStep) = \basicBrickSolver(\coefficientOuterStep\timeStep)^{\numberOuterSteps} \basicBrickSolver(\coefficientInnerStep\timeStep) \basicBrickSolver(\coefficientOuterStep\timeStep)^{\numberOuterSteps},
\end{equation}
where $\numberOuterSteps \in \naturalsWithoutZero$.
According to \cite[Theorem 22]{mclachlan2002splitting}, the operator $\globalScheme$ is such that 
\begin{equation*}
    \globalScheme(\timeStep) = e^{\timeStep (2 \numberOuterSteps \coefficientOuterStep + \coefficientInnerStep) \vectorial{\abstractProblemFunction} + \timeStep^3 (2 \numberOuterSteps \coefficientOuterStep^3 + \coefficientInnerStep^3) \vectorial{\perturbationVectorField}} + \bigO{\timeStep^5},
\end{equation*}
thus it has a local truncation error of order five---thus it is globally accurate at order four---provided that the conditions 
\begin{align}
    2\numberOuterSteps \coefficientOuterStep + \coefficientInnerStep &= 1, \label{eq:fourtOrderConditionOne}\\
    2 \numberOuterSteps \coefficientOuterStep^3 + \coefficientInnerStep^3 &= 0,\label{eq:fourtOrderConditionTwo}
\end{align}
are satisfied.
For we want to deal with a lattice Boltzmann approach, characterized by the fact that the transport phase \eqref{eq:transportPhase} is made up of integer shifts on the discrete spatial grid $\spaceLattice{\spatialDimensionality}$, we would like $\coefficientOuterStep, \coefficientInnerStep \in \rationals$.\footnote{It could still be possible to obtain a lattice Boltzmann scheme whenever $\coefficientOuterStep, \coefficientInnerStep \in \reals \smallsetminus \rationals$, provided that $\coefficientOuterStep$ and $\coefficientInnerStep$ are commensurable.}
In order to fulfill \eqref{eq:fourtOrderConditionTwo}, one can easily see that either $\coefficientOuterStep$ or $\coefficientInnerStep$ has to be negative, meaning that steps with transport according to the sign of the discrete velocities are interspersed with steps in the opposite direction.
Otherwise said, the price to pay to obtain fourth-order consistency is to alternate steps both forward and backward in time.
Inserting \eqref{eq:fourtOrderConditionOne} into \eqref{eq:fourtOrderConditionTwo} gives $2 \numberOuterSteps \coefficientOuterStep^3 + (1-2\numberOuterSteps \coefficientOuterStep)^3 = 0$.
For $\numberOuterSteps = 1$ the only real solution is  irrational.
The same holds for $\numberOuterSteps = 2, 3$, and these cases are not of interest in our setting, because once put back into \eqref{eq:fourtOrderConditionOne}, both  $\coefficientOuterStep$ and $\coefficientInnerStep$ are irrational and eventually incommensurable.
For $\numberOuterSteps = 4$, we have the rational solution $\coefficientOuterStep = 1/6$, hence $\coefficientInnerStep = -1/3$.
Therefore, the formula that we retain is 
\begin{equation}\label{eq:splittingToUse}
    \globalScheme(\timeStep) = \basicBrickSolver\Bigl (\frac{\timeStep}{6}\Bigr )^4\basicBrickSolver\Bigl (-\frac{\timeStep}{3} \Bigr ) \basicBrickSolver\Bigl (\frac{\timeStep}{6}\Bigr )^4.
\end{equation}
Looking at \eqref{eq:splittingToUse} and \eqref{eq:basicBrickSplitting}, we see that the shortest transport phase features a time-step equal to $\timeStep/24$.
This means that ``particles'' roughly travel $\kineticVelocity \timeStep/(24\spaceStep)$ gridpoints at each time the transport operator is called. 
To ensure that the scheme remains a lattice Boltzmann scheme, we enforce that $\kineticVelocity \timeStep/(24\spaceStep) = \numberShiftsTransport \in \naturalsWithoutZero$.
The time step is given by $\timeStep = 24 \numberShiftsTransport \spaceStep / \kineticVelocity$.
We consistently take $\numberShiftsTransport = 1$.
The kinetic velocity is freely chosen, still ensuring that all the waves in \eqref{eq:systemConservationLaws} are resolved:
\begin{equation}\label{eq:choiceKineticVelocity}
    \kineticVelocity \greatersim \max_{i \in \integerInterval{1}{\numberConservationLaws}} |\lambda_i|,
\end{equation}
where the $\lambda_i$ are the eigenvalues of the Jacobian matrix of $\vectorial{\fluxSystemConservationLaws}$, \idEst{} the velocities of the waves.
The kinetic velocity should not be too large compared to the fastest wave in the system, to ensure accuracy.

\begin{remark}
    We see that according to \eqref{eq:splittingToUse}, at each time step, ``particles'' undergo $1\times 4 \times 4 \times \numberShiftsTransport = 16\numberShiftsTransport$ shifts according to the sign of their velocities and eight relaxations, followed by $2\times 1 \times 4 \times \numberShiftsTransport = 8\numberShiftsTransport$ shifts (of twice the length) in the opposite direction and two relaxations, followed again by $1\times 4 \times 4 \times \numberShiftsTransport = 16\numberShiftsTransport$ shifts according to the sign of their velocities and eight relaxations.
\end{remark}

\begin{remark}
    By making the change of variable $\timeStep \mapsto 24\timeStep$, we can interpret things in another manner.
    The overall scheme is fourth-order accurate if we observe it every $24$ time steps doing 
    \begin{multline*}
        \overbrace{\transport(\timeStep)\relaxation{}\transport(2\timeStep)\relaxation{} \transport(\timeStep) \times \cdots \times \transport(\timeStep)\relaxation{}\transport(2\timeStep)\relaxation{} \transport(\timeStep)}^{4 \text{ times}} \\
        \times \transport(-2\timeStep)\relaxation{}\transport(-4\timeStep)\relaxation{} \transport(-2\timeStep)  \\
        \times \underbrace{\transport(\timeStep)\relaxation{}\transport(2\timeStep)\relaxation{} \transport(\timeStep) \times \cdots \times \transport(\timeStep)\relaxation{}\transport(2\timeStep)\relaxation{} \transport(\timeStep)}_{4 \text{ times}},
    \end{multline*} 
    which means having done $32$ steps (of length $\numberShiftsTransport$) forward and $8$ steps backward  (of twice the step) with a specific interleaving.
\end{remark}

\begin{remark}[Cost of the scheme]
    Considering \eqref{eq:splittingToUse} and the previous remark, the cost of the whole algorithm might seem very high. 
    However, the time-marching procedure is totally made up of traditional transport and relaxation steps of a standard lattice Boltzmann method, and techniques to parallelize and deploy them of modern architectures (\emph{e.g.} GPUs) are available and indeed employed.
    A crucial advantage of the splitting strategy is that it does not require additional storage, as it is the case with a Runge-Kutta approach. 
    Moreover, the numerical solution in the inner sub-steps is consistent---\idEst{} meaningful---and simply second-order accurate. We can thus consider the method as an (almost) standard second-order lattice Boltzmann scheme, where fourth-order accuracy is observed at specific steps of the time-marching procedure.
\end{remark}


\subsection{$L^2$ stability}\label{sec:L2Stability}

We see in \Cref{sec:entropyStability} that---with a simple procedure acting on the relaxation parameter---our scheme can possess excellent features concerning entropy stability, ensuring stability in a non-linear framework.
This comes from the fact that entropy provides---reminding us of the work by \cite{junk2009weighted}---the right weighted norm to take the effect of the relaxation into account.
It is more involved to study the stability with respect to the $L^2$ norm, which furthermore applies only to a linear setting.
Even for the standard \lbmSchemeVectorial{2}{4}{} from \Cref{sec:standardLBM}, no explicit $L^2$ stability condition is known, to the best of our knowledge.

Nevertheless, we start by a brief study concerning $L^2$ stability.
We consider the case of $\spatialDimensionality = 1$ with one conservation law $\numberConservationLaws = 1$, thus we use a \lbmSchemeVectorial{1}{2}{} scheme.
Moreover, we consider a linear problem: $\fluxSystemConservationLaws(\conservedMomentsSystemConservationLaws) = \transportVelocity \conservedMomentsSystemConservationLaws$.
A polynomial with complex coefficients is said to be a simple \emph{von Neumann} polynomial if its roots are in the closed unit disk and those on the unit circle are simple.
Then, the corresponding Finite Difference scheme computed using the characteristic polynomial of $\globalScheme$ \cite{bellotti2023truncation} is $L^2$ stable if the characteristic polynomial, upon considering its Fourier transform, is a simple \emph{von Neumann} polynomial for every frequency.
Conversely, the original lattice Boltzmann scheme $\globalScheme$ is $L^2$ stable if its minimal polynomial is a simple \emph{von Neumann} polynomial for every frequency, see \cite{bellotti2023influence}.
It is well-know \cite{graille2014approximation} that the standard lattice Boltzmann scheme ($\relaxation{} \transport(\timeStep)$ or $ \transport(\timeStep) \relaxation{}$) from \Cref{sec:standardLBM} is stable for the $L^2$ norm under the strict condition 
\begin{equation}\label{eq:CFLLeapFrog}
    \frac{|\transportVelocity|\timeStep}{\numberShiftsTransport \spaceStep} < 1,
\end{equation}
which is the CFL condition of a leap-frog scheme, \confer{} \cite{strikwerda2004finite}.
For the new scheme $\globalScheme(\timeStep)$ given by \eqref{eq:splittingToUse}, we cannot conclude that it is stable provided that $\basicBrickSolver (\frac{\timeStep}{6} )$ and $\basicBrickSolver (-\frac{\timeStep}{3} )$ are stable, because these operators are not simultaneously diagonalizable, for they do not commute.
We have to study the eigenvalues of $\globalScheme(\timeStep)$.
In particular, we focus on its characteristic polynomial---whose roots include those of the minimal polynomial---as long as it allows concluding.
Otherwise, we switch to the minimal polynomial.
The characteristic polynomial reads 
\begin{equation}\label{eq:charEquation}
    \determinant(\timeShiftOperator\identityOperator - \fourierTransformed{\globalScheme}(\timeStep)(\frequency\spaceStep)) = \timeShiftOperator^2 - \trace (\fourierTransformed{\globalScheme}(\timeStep)(\frequency\spaceStep))\timeShiftOperator + \determinant(\fourierTransformed{\globalScheme}(\timeStep)(\frequency\spaceStep)),
\end{equation}
for $|\frequency\spaceStep| \leq \pi$. Here, hats denote Fourier-transformed quantities.
Since $\determinant(\fourierTransformed{\transport}(\timeStep)(\frequency\spaceStep)) = 1$, $\determinant(\fourierTransformed{\relaxation{}}(\frequency\spaceStep)) = -1$, and $\globalScheme$ is made up of an even number of relaxations $\relaxation{}$, we can use the formula for the determinant of a product of matrices and thus obtain $\determinant(\fourierTransformed{\globalScheme}(\timeStep)(\frequency\spaceStep)) = 1$.
For the trace appearing in \eqref{eq:charEquation}, less can be said. Its explicit expression---computed using a computer algebra system---is involved and provided in \Cref{sec:traceD1Q2} for the interested readers. Yet, we observe that $\trace (\fourierTransformed{\globalScheme}(\timeStep)(\frequency\spaceStep))\in \reals$.
Using the results by \cite{miller1971location}, the characteristic polynomial is a simple \emph{von Neumann} polynomial if and only if the sole root of $\tfrac{\text{d}}{\text{d}\timeShiftOperator}\determinant(\timeShiftOperator\identityOperator - \fourierTransformed{\globalScheme}(\timeStep)(\frequency\spaceStep))$ is in the open unit disk. This reads
\begin{equation}\label{eq:traceInequality}
    |\trace (\fourierTransformed{\globalScheme}(\timeStep)(\frequency\spaceStep))| < 2.
\end{equation}
One can be easily persuaded, \confer{} \Cref{sec:traceD1Q2}, that \eqref{eq:traceInequality} if fulfilled as long as $|\transportVelocity|/\kineticVelocity < 1$ (except at $\frequency\spaceStep = 0, \tfrac{\pi}{2\numberShiftsTransport}$ which need to be analyzed separately) and, when going beyond this value, that the left-hand side of \eqref{eq:traceInequality} is critically maximal at $\frequency\spaceStep = \frac{\pi}{4\numberShiftsTransport}$.
Evaluating \eqref{eq:traceInequality} at this value gives an inequality of degree 16 in $\transportVelocity/\kineticVelocity$, which, solved using \texttt{sage-math}, exactly gives \eqref{eq:stabilityConditionSousPas}.
This allows to conclude that under this condition, except for $\frequency\spaceStep = 0, \tfrac{\pi}{2\numberShiftsTransport}$, the characteristic polynomial of $\fourierTransformed{\globalScheme}(\timeStep)(\frequency\spaceStep)$ and thus the minimal polynomial are simple \emph{von Neumann}.
The two exceptional cases are exactly those where there is a gap between characteristic and minimal polynomials \cite{bellotti2023influence}. Indeed $\determinant(\timeShiftOperator\identityOperator - \fourierTransformed{\globalScheme}(\timeStep)(\frequency\spaceStep))|_{\frequency\spaceStep=0, {\pi}/{(2\numberShiftsTransport)}} = (\timeShiftOperator - 1)^2$, whereas $\fourierTransformed{\globalScheme}(\timeStep)(\frequency\spaceStep)|_{\frequency\spaceStep=0, {\pi}/{(2\numberShiftsTransport)}} = \identityOperator$ indicates that $\timeShiftOperator - 1$ is the minimal polynomial in this case, and it is simple.
Finally, we observe that we cannot include the case $|\transportVelocity|/\kineticVelocity = 1$, since $\determinant(\timeShiftOperator\identityOperator - \fourierTransformed{\globalScheme}(\timeStep)(\tfrac{\pi}{4\numberShiftsTransport}))|_{|\transportVelocity|/\kineticVelocity = 1} = (\timeShiftOperator - 1)^2$ but 
\begin{equation*}
    \fourierTransformed{\globalScheme}(\timeStep)(\tfrac{\pi}{4\numberShiftsTransport})|_{|\transportVelocity|/\kineticVelocity = 1} = 
    \begin{pmatrix}
        1 & 32i \\
        0 & 1
    \end{pmatrix}.
\end{equation*}
This allows concluding on the $L^2$ stability of the lattice Boltzmann scheme, see the following result.
\begin{proposition}
    Let $\spatialDimensionality = 1$, $\numberConservationLaws = 1$, and $\fluxSystemConservationLaws(\conservedMomentsSystemConservationLaws) = \transportVelocity \conservedMomentsSystemConservationLaws$.
    Consider a \lbmSchemeVectorial{1}{2}{} scheme.
    Then $\globalScheme(\timeStep)$ is $L^2$-stable under the condition
    \begin{equation}\label{eq:stabilityConditionSousPas}
        \frac{|\transportVelocity|}{\kineticVelocity} =  \frac{|\transportVelocity| \timeStep}{24 \numberShiftsTransport \spaceStep} < 1.
    \end{equation}
\end{proposition}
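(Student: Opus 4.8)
The plan is to pass to Fourier space and reduce $L^2$ stability to a spectral condition on the $2\times2$ symbol $\fourierTransformed{\globalScheme}(\timeStep)(\frequency\spaceStep)$. By the criterion recalled above, $\globalScheme$ is $L^2$-stable if and only if its minimal polynomial is a simple \emph{von Neumann} polynomial for every $|\frequency\spaceStep| \leq \pi$. First I would normalize the quadratic characteristic polynomial \eqref{eq:charEquation} to the form $\timeShiftOperator^2 - \trace(\fourierTransformed{\globalScheme}(\timeStep)(\frequency\spaceStep))\timeShiftOperator + 1$. The constant term equals one because $\determinant(\fourierTransformed{\transport}(\timeStep)(\frequency\spaceStep)) = 1$, each relaxation contributes a factor $-1$, and $\globalScheme$ built from \eqref{eq:splittingToUse} and \eqref{eq:basicBrickSplitting} contains an even number---eighteen---of relaxations $\relaxation{}$. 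I would also record that $\trace(\fourierTransformed{\globalScheme}(\timeStep)(\frequency\spaceStep)) \in \reals$, a fact extracted from the explicit symbolic expression deferred to \Cref{sec:traceD1Q2}.

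With $\determinant = 1$ and real trace, I would next invoke the results of \cite{miller1971location}: the derivative $\tfrac{\text{d}}{\text{d}\timeShiftOperator}\determinant(\timeShiftOperator\identityOperator - \fourierTransformed{\globalScheme}(\timeStep)(\frequency\spaceStep)) = 2\timeShiftOperator - \trace(\fourierTransformed{\globalScheme}(\timeStep)(\frequency\spaceStep))$ has its unique root $\trace/2$ in the open unit disk precisely when $|\trace| < 2$. This shows that \eqref{eq:traceInequality} is necessary and sufficient for the characteristic polynomial to be simple \emph{von Neumann}, and hence---wherever characteristic and minimal polynomials coincide---for the minimal polynomial as well.

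The crux is then a uniform-in-frequency analysis of the scalar inequality \eqref{eq:traceInequality}, regarded as a function of $\frequency\spaceStep \in [-\pi, \pi]$ and of the single parameter $\transportVelocity/\kineticVelocity$. I would first check that for $|\transportVelocity|/\kineticVelocity < 1$ the inequality holds for all frequencies except the isolated values $\frequency\spaceStep = 0, \tfrac{\pi}{2\numberShiftsTransport}$, and then locate the maximizer of $|\trace|$. Using the symbolic expression, I would show this maximizer is $\frequency\spaceStep = \tfrac{\pi}{4\numberShiftsTransport}$; substituting that value turns \eqref{eq:traceInequality} into a polynomial inequality of degree $16$ in $\transportVelocity/\kineticVelocity$, whose solution---obtained with a computer algebra system---is exactly \eqref{eq:stabilityConditionSousPas}. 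It remains to treat the two exceptional frequencies and the boundary case: at $\frequency\spaceStep = 0, \tfrac{\pi}{2\numberShiftsTransport}$ the symbol reduces to $\identityOperator$ while $\determinant(\timeShiftOperator\identityOperator - \fourierTransformed{\globalScheme}) = (\timeShiftOperator-1)^2$, so the minimal polynomial is the simple $\timeShiftOperator - 1$ and stability is not compromised; conversely, $|\transportVelocity|/\kineticVelocity = 1$ must be excluded because at $\frequency\spaceStep = \tfrac{\pi}{4\numberShiftsTransport}$ the symbol becomes a genuine Jordan block, forcing the minimal polynomial $(\timeShiftOperator - 1)^2$, which is not simple \emph{von Neumann}.

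The main obstacle is the non-commutativity of $\basicBrickSolver(\tfrac{\timeStep}{6})$ and $\basicBrickSolver(-\tfrac{\timeStep}{3})$: since these factors are not simultaneously diagonalizable, stability of $\globalScheme$ cannot be deduced from stability of the individual bricks, and the composite trace must be analyzed directly. Identifying the worst-case frequency $\tfrac{\pi}{4\numberShiftsTransport}$ and reducing the problem to a single tractable one-variable polynomial inequality is therefore the technical heart of the argument, and is the step for which symbolic computation is indispensable.
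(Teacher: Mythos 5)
Your proposal follows essentially the same route as the paper's argument: reduction to the simple \emph{von Neumann} property of the minimal polynomial, the determinant-equals-one observation from the even number (indeed eighteen) of relaxations, Miller's criterion reducing everything to $|\trace| < 2$, the worst-case frequency $\frequency\spaceStep = \tfrac{\pi}{4\numberShiftsTransport}$ yielding a degree-16 polynomial inequality solved symbolically, and the separate treatment of the exceptional frequencies and of the excluded boundary case $|\transportVelocity|/\kineticVelocity = 1$ via the Jordan block. The argument is correct and complete to the same degree of rigor as the paper's.
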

Observe that there is no difference between \eqref{eq:CFLLeapFrog} and \eqref{eq:stabilityConditionSousPas}, because in between, we have made the change of variable $\timeStep \mapsto \frac{1}{24}\timeStep$.
This is indeed the sub-characteristic condition found in \Cref{eq:linearTransportEquation}.

\section{Numerical experiments: Order of the scheme}\label{sec:experimentsOrder}

We now proceed to several numerical experiments to confirm the theoretical order of the method we have devised in a non-linear context.
We consider both scalar problems and systems in one and two spatial dimensions.
All the tests have been implemented and parallelized on GPUs using \texttt{OpenCL} \cite{baty:hal-02965967}.

\subsection{Non-linear scalar problem: Burgers' equation in 1D}\label{sec:burgers}

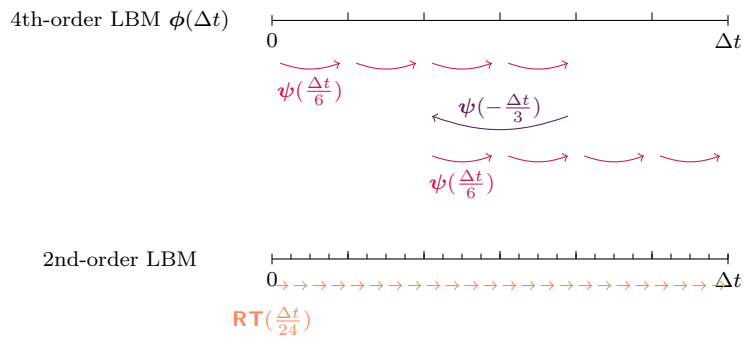
\begin{figure}[htbp]
    \begin{center}
        \begin{footnotesize}
            \begin{tikzpicture}
                \node at (-2, 0pt) {4th-order LBM $\globalScheme(\timeStep)$};

                \draw (0,0) -- (6,0);
                
                \draw (0,2pt) -- (0,-2pt) node[below] {$0$};
                \draw (6,2pt) -- (6,-2pt) node[below] {$\timeStep$};
                
                \foreach \x in {1,2,...,5}{
                  \draw (\x,0) -- (\x,3pt);
                }
                \foreach \x in {1,2,3}{
                  \node (A\x) at (\x,-15pt) {};
                  \node (B\x) at (\x+1,-15pt) {};
                  \draw[->, bend right=20, color=myRed] (A\x) to (B\x);
                }
                \node (A0) at (0,-15pt) {};
                \node (B0) at (1,-15pt) {};
                \draw[->, bend right=20, color=myRed] (A0) to node[midway, below] {$\basicBrickSolver(\tfrac{\timeStep}{6})$}  (B0);
    
                \node (C0) at (4,-35pt) {};
                \node (D0) at (2,-35pt) {};
                \draw[->, bend right=-20, color=myViolet] (C0) to node[midway, above] {$\basicBrickSolver(-\tfrac{\timeStep}{3})$}  (D0);
                  
                \foreach \x in {1,2,3}{
                  \node (E\x) at (2+\x,  -50pt) {};
                  \node (F\x) at (2+\x+1,-50pt) {};
                  \draw[->, bend right=20, color=myRed] (E\x) to (F\x);
                }
                \node (E0) at (2,-50pt) {};
                \node (F0) at (3,-50pt) {};
                \draw[->, bend right=20, color=myRed] (E0) to node[midway, below] {$\basicBrickSolver(\tfrac{\timeStep}{6})$}  (F0);
            
            \draw (0,-90pt) -- (6,-90pt);
            \node at (-2,-90pt) {2nd-order LBM};
            \draw (0,-90pt+2pt) -- (0,-90pt-2pt) node[below] {$0$};
            \draw (6,-90pt+2pt) -- (6,-90pt-2pt) node[below] {$\timeStep$};
    
            \foreach \x in {1,2,...,5}{
                  \draw (\x,-90pt) -- (\x,-90pt+3pt);
            }
            \foreach \x in {1,2,...,24}{
                  \draw (0.25*\x,-90pt) -- (0.25*\x,-90pt+2pt);
            }
            \foreach \x in {0, ..., 23}{
                  \node (A\x) at (0.25*\x,-100pt) {};
                  \node (B\x) at (0.25*\x+0.25,-100pt) {};
                  \draw[->, color=myOrange, shorten >=-2pt, shorten <=-2pt] (A\x) to (B\x);
                }
            \draw (0,-105pt) node[below, color=myOrange] {$\relaxation{} \transport(\tfrac{\timeStep}{24})$};

              \end{tikzpicture}
        \end{footnotesize}
    \end{center}\caption{\label{fig:fairComparison}Way of devising a fair comparison between our new fourth-order lattice Boltzmann scheme (top) and the original second-order lattice Boltzmann scheme (bottom).}
\end{figure}

To test the fourth-order convergence of our solver $\globalScheme$ in a genuinely non-linear setting, we consider the Burgers' equation on a bounded domain $[0, 1]$ endowed with periodic boundary conditions.
The initial datum is the point-wise discretization of $\conservedMomentsSystemConservationLaws(\timeVariable = 0, \spaceVariable) = \sin(2\pi\spaceVariable)$ with data taken at equilibrium, which means that $\vectorial{\distributionFunctionLetter}_{\indicesVelocity}(\timeVariable = 0) = \vectorial{\distributionFunctionLetter}_{\indicesVelocity}^{\atEquilibrium}(\vectorial{\conservedMomentsSystemConservationLaws}(\timeVariable = 0))$ for $\indicesVelocity \in\integerInterval{1}{\numberVelocities}$.
In order to fulfill the sub-characteristic condition, the kinetic velocity is $\kineticVelocity = 1.2$.
The final time of the simulation, at which we measure errors, is $\finalTime = 1/10$, which is before the solution exhibits a shock wave.

We would also like to compare the accuracy of our approach against the standard second-order lattice Boltzmann method given by $\relaxation{} \transport(\timeStep)$.
To ensure a fair comparison between errors at roughly the same computational cost, we have to proceed as indicated in \Cref{fig:fairComparison}, namely consider the scheme given by $(\relaxation{} \transport(\tfrac{\timeStep}{24}))^{24}$.
This allows to have roughly the same number of operations between the second-order scheme (24 steps) and the fourth-order scheme (32 steps) for advancing of $\timeStep$ in time. The additional steps are needed for the fourth-order scheme sometimes goes backward in time.

\begin{table}[htbp]
    \caption{\label{tab:convergenceBurgersOrder}Errors and order of convergence in the $L^2$ norm for the Burgers' equation using the second-order lattice Boltzmann scheme and our new fourth-order scheme.}
    \setlength{\tabcolsep}{2.5pt}
        \begin{center}
            \begin{scriptsize}
                \begin{tabular}{|c|cc|cc|}
                    \hline
                    & \multicolumn{2}{c|}{2nd-order LBM} & \multicolumn{2}{c|}{4th-order LBM} \\
                    \hline
                    $\spaceStep$ & $L^2$ error & Order & $L^2$ error & Order \\
                    \hline 
                    2.000E-03 & 8.592E-05	&     & 3.370E-06 & 	     \\ 
                    1.250E-03 & 3.358E-05	& 2.00& 1.552E-06 & 	1.65 \\ 
                    7.813E-04 & 1.404E-05	& 1.86& 1.742E-07 & 	4.65 \\ 
                    4.883E-04 & 5.494E-06	& 2.00& 3.365E-08 & 	3.50 \\ 
                    3.053E-04 & 2.160E-06	& 1.99& 5.184E-09 & 	3.98 \\ 
                    1.908E-04 & 7.799E-07	& 2.17& 8.688E-10 & 	3.80 \\ 
                    1.193E-04 & 3.057E-07	& 1.99& 1.221E-10 & 	4.18 \\ 
                    7.454E-05 & 1.287E-07	& 1.84& 2.109E-11 & 	3.74 \\ 
                    \hline 
                \end{tabular}
            \end{scriptsize}
        \end{center}
\end{table}

The results are given in \Cref{tab:convergenceBurgersOrder} and show second-order convergence for the original lattice Boltzmann scheme and fourth-order convergence for the new scheme.
Comparing the standard second-order lattice Boltzmann scheme to our new scheme in the framework where they have roughly the same computational cost, we see that even at a very coarse resolution, the fourth-order scheme outperforms the standard scheme being at least twenty times more accurate.

\subsection{Non-linear system: Shallow water equations in 1D}

\begin{table}[htbp]
    \caption{\label{tab:convergenceShalloWaters}Error estimations and order of convergence in the $L^2$ metric for the shallow water system.}
    \setlength{\tabcolsep}{2.5pt}
        \begin{center}
            \begin{scriptsize}
                \begin{tabular}{|c|cc|cc|}
                    \hline
                    & \multicolumn{2}{c|}{Height $\heightShallowWater$} & \multicolumn{2}{c|}{Velocity $\velocityShallowWater$} \\
                    \hline
                    $\spaceStep$ & $\text{{Err-Estim}}_{\heightShallowWater}$ & Order & $\text{{Err-Estim}}_{\velocityShallowWater}$ & Order \\
                    \hline 
                    7.8125E-03 & 	5.8333E-06 &	     &	2.9538E-05 &	    \\
                    3.9063E-03 & 	7.9483E-07 &	2.88 &	1.6474E-06 &	4.16\\
                    1.9531E-03 & 	1.0703E-07 &	2.89 &	4.8759E-08 &	5.08\\
                    9.7656E-04 & 	7.6700E-09 &	3.80 &	2.9001E-09 &	4.07\\
                    4.8828E-04 & 	4.9440E-10 &	3.96 &	1.8273E-10 &	3.99\\
                    2.4414E-04 & 	3.1134E-11 &	3.99 &	1.1456E-11 &	4.00\\
                    1.2207E-04 & 	1.9495E-12 &	4.00 &	7.1665E-13 &	4.00\\
                    6.1035E-05 & 	1.2202E-13 &	4.00 &	4.5492E-14 &	3.98\\                    
                    \hline
                \end{tabular}
            \end{scriptsize}
        \end{center}
\end{table}

We now test the order of the method for a system of equations.
We consider the same setting as \Cref{sec:burgers} except for the fact that we deal with the shallow water system with gravity $\gravityShallowWater = 1$ and initial datum $(\heightShallowWater, \velocityShallowWater) (\timeVariable = 0, \spaceVariable)= (1/2 + 1/5 \sin(2\pi\spaceVariable), 0)$.
Simulations are carried until a final time of $\finalTime = 5/16$ with kinetic velocity $\kineticVelocity = 1.2$.
For the exact solution of the problem is difficult to find, the fourth-order accuracy of the method is demonstrated using the following error estimators:
\begin{multline*}
    \text{{Err-Estim}}_{\heightShallowWater} = \sqrt{\sum_{\indiceSpace\in\relatives}\spaceStep |\heightShallowWater_{\spaceStep}(\finalTime, \indiceSpace\spaceStep) - \heightShallowWater_{\spaceStep/2}(\finalTime, \indiceSpace\spaceStep)|^2}, \\
    \text{{Err-Estim}}_{\velocityShallowWater} = \sqrt{\sum_{\indiceSpace\in\relatives}\spaceStep |\velocityShallowWater_{\spaceStep}(\finalTime, \indiceSpace\spaceStep) - \velocityShallowWater_{\spaceStep/2}(\finalTime, \indiceSpace\spaceStep)|^2},
\end{multline*}
where $\heightShallowWater_{\spaceStep}$ and $\velocityShallowWater_{\spaceStep}$ indicate the discrete solution of our fourth-order scheme computed with space step $\spaceStep$.
We expect fourth-order convergence, which translates into $\text{{Err-Estim}}_{\heightShallowWater}, \text{{Err-Estim}}_{\velocityShallowWater} = \bigO{\spaceStep^4}$ as $\spaceStep \to 0$.
The numerical results in \Cref{tab:convergenceShalloWaters} give the expected trends for both height $\heightShallowWater$ and velocity $\velocityShallowWater$.

\subsection{Multidimensional non-linear scalar problem: Burgers' equation in 2D}

\begin{table}[htbp]
    \caption{\label{tab:convergenceBurgers2D}Error estimations and order of convergence in the $L^2$ metric for the Burgers' equation in 2D.}
    \setlength{\tabcolsep}{2.5pt}
        \begin{center}
            \begin{scriptsize}
                \begin{tabular}{|c|cc|}
                    \hline
                    $\spaceStep$ & $\text{{Err-Estim}}$ & Order  \\
                    \hline
                    6.667E-02	& 8.784E-02	&      \\  
                    3.226E-02	& 2.643E-02	& 1.65 \\
                    1.587E-02	& 9.601E-03	& 1.43 \\
                    7.874E-03	& 1.854E-03	& 2.35 \\
                    3.922E-03	& 3.155E-04	& 2.54 \\
                    1.957E-03	& 3.285E-05	& 3.25 \\
                    9.775E-04	& 1.155E-06	& 4.82 \\
                    4.885E-04	& 2.541E-08	& 5.50 \\
                    2.442E-04	& 1.749E-09	& 3.86\\
                    \hline
                \end{tabular}
            \end{scriptsize}
        \end{center}
\end{table}

To test our approach in 2D, we consider the Burgers' equation $\partial_{\timeVariable}\conservedMomentsSystemConservationLaws + \partial_{x}(\conservedMomentsSystemConservationLaws^2/2) + \partial_{y}(3\conservedMomentsSystemConservationLaws^2/10) = 0$ on the bounded domain $[0, 1]^2$ endowed with periodic boundary conditions.
The initial datum is a narrow Gaussian profile, given by $\conservedMomentsSystemConservationLaws(\timeVariable = 0, \vectorial{\spaceVariable}) = \text{exp}(-100|\vectorial{\spaceVariable} - \transpose{(1/2, 1/2)}|^2)$.
Simulations are carried until final time $\finalTime = 1/16$ where we measure
\begin{equation*}
    \text{{Err-Estim}} = \sqrt{\sum_{\vectorial{\indiceSpace}\in\relatives^{2}}\spaceStep^2 |\conservedMomentsSystemConservationLaws_{\spaceStep}(\finalTime, \vectorial{\indiceSpace} \spaceStep) - \conservedMomentsSystemConservationLaws_{\spaceStep/2}(\finalTime, \vectorial{\indiceSpace} \spaceStep)|^2},
\end{equation*}
given in \Cref{tab:convergenceBurgers2D}.
Once again we observe that our numerical method is fourth-order accurate, as expected.

\section{Entropy stability}\label{sec:entropyStability}

We now address a more useful notion of stability compared to the one studied in \Cref{sec:L2Stability}, which is going to be especially suitable for the non-linear framework.
The total microscopic entropy in the domain---at each time---is given by 
\begin{equation*}
    \sum_{\vectorial{\spaceVariable} \in \spaceLattice{\spatialDimensionality}} \microscopicEntropyLetter(\vectorial{\distributionFunctionLetter}_1(\vectorial{\spaceVariable}), \dots, \vectorial{\distributionFunctionLetter}_{\numberVelocities}(\vectorial{\spaceVariable})).
\end{equation*}
If the computational domain is infinite or periodic boundary conditions on the distribution functions are enforced, this quantity is conserved throughout the transport phase $\transport(\timeStep)$, since it is made up of shifts for each discrete velocity, without mixing the distribution functions between them.
Mathematically, this reads 
\begin{multline*}
    \sum_{\vectorial{\spaceVariable} \in \spaceLattice{\spatialDimensionality}} \microscopicEntropyLetter(\transport(\timeStep)(\vectorial{\distributionFunctionLetter}_1(\vectorial{\spaceVariable}), \dots, \vectorial{\distributionFunctionLetter}_{\numberVelocities}(\vectorial{\spaceVariable}))) \\
    = \sum_{\vectorial{\spaceVariable} \in \spaceLattice{\spatialDimensionality}} \sum_{\indicesVelocity = 1}^{\numberVelocities} \kineticEntropy{\indicesVelocity}(\vectorial{\distributionFunctionLetter}_{\indicesVelocity}(\vectorial{\spaceVariable} - \numberShiftsTransport \tfrac{\vectorial{\kineticVelocity}_{\indicesVelocity}}{\kineticVelocity}\spaceStep)) 
    =  \sum_{\indicesVelocity = 1}^{\numberVelocities} \sum_{\vectorial{\spaceVariable} \in \spaceLattice{\spatialDimensionality}} \kineticEntropy{\indicesVelocity}(\vectorial{\distributionFunctionLetter}_{\indicesVelocity}(\vectorial{\spaceVariable} - \numberShiftsTransport \tfrac{\vectorial{\kineticVelocity}_{\indicesVelocity}}{\kineticVelocity}\spaceStep)) \\
    = \sum_{\indicesVelocity = 1}^{\numberVelocities} \sum_{\vectorial{\spaceVariable} \in \spaceLattice{\spatialDimensionality}} \kineticEntropy{\indicesVelocity}(\vectorial{\distributionFunctionLetter}_{\indicesVelocity}(\vectorial{\spaceVariable})) 
    = \sum_{\vectorial{\spaceVariable} \in \spaceLattice{\spatialDimensionality}} \microscopicEntropyLetter(\vectorial{\distributionFunctionLetter}_1(\vectorial{\spaceVariable}), \dots, \vectorial{\distributionFunctionLetter}_{\numberVelocities}(\vectorial{\spaceVariable})).
\end{multline*}

This is generally not true for the relaxation $\relaxation{\relaxationParameter = 2}$ that we have employed so far.
There is an exception to this: the relaxation phase $\relaxation{\relaxationParameter = 2}$ preserves the microscopic entropy when the problem is linear.
For example, in the context of the linear transport equation, \confer{} \Cref{eq:linearTransportEquation}, simple computations give that 
\begin{equation*}
    \microscopicEntropyLetter (\relaxation{\relaxationParameter = 2}{(\distributionFunctionLetter_+, \distributionFunctionLetter_-)}) = \microscopicEntropyLetter (\distributionFunctionLetter_+, \distributionFunctionLetter_-) = \frac{\kineticVelocity}{\kineticVelocity +  \transportVelocity} (\distributionFunctionLetter_{+})^2 + \frac{\kineticVelocity}{\kineticVelocity -  \transportVelocity} (\distributionFunctionLetter_{-})^2.
\end{equation*}
This is generalized by the following result.

\newcommand{\symetrizer}{\matricial{P}}

\begin{proposition}
    Let \eqref{eq:systemConservationLaws} be linear, that is of the form
    \begin{equation*}
        \partial_{\timeVariable} \vectorial{\conservedMomentsSystemConservationLaws} + \sum_{\indicesSpace = 1}^{\spatialDimensionality}  \matricial{A}^{\indicesSpace}  \frac{\partial \vectorial{\conservedMomentsSystemConservationLaws}}{\partial \spaceVariable_{\indicesSpace}}   = 0.
    \end{equation*}
    Let $\symetrizer$ be a symmetrizer, that is, a symmetric definite positive matrix such that $\symetrizer \matricial{A}^{\indicesSpace}$ is symmetric for all $\indicesSpace \in \integerInterval{1}{\spatialDimensionality}$.
    Consider the natural quadratic entropy-entropy flux given by 
    \begin{equation*}
        \entropy(\vectorial{\conservedMomentsSystemConservationLaws} ) = \frac{1}{2} \symetrizer \vectorial{\conservedMomentsSystemConservationLaws}  \cdot \vectorial{\conservedMomentsSystemConservationLaws} , \qquad \entropyFlux^{\indicesSpace}(\vectorial{\conservedMomentsSystemConservationLaws} ) = \frac{1}{2} \symetrizer \matricial{A}^{\indicesSpace}\vectorial{\conservedMomentsSystemConservationLaws}  \cdot \vectorial{\conservedMomentsSystemConservationLaws}.
    \end{equation*}
    Assume that the kinetic entropies $\kineticEntropy{1}, \dots, \kineticEntropy{\numberVelocities}$ are such that $\kineticEntropyLegendre{1}, \dots, \kineticEntropyLegendre{\numberVelocities}$ are quadratic and convex in their argument, and fulfill
    \begin{equation*}
        \sum_{\indicesVelocity = 1}^{\numberVelocities}\kineticEntropyLegendre{\indicesVelocity}(\vectorial{\dualVariable}) = \legendreTransformed{\entropy}(\vectorial{\dualVariable} ) = \frac{1}{2}\symetrizer^{-1} \vectorial{\dualVariable} \cdot \vectorial{\dualVariable}, \qquad \sum_{\indicesVelocity = 1}^{\numberVelocities}\vectorial{\kineticVelocity}_{\indicesVelocity}^{\indicesSpace}\kineticEntropyLegendre{\indicesVelocity}(\vectorial{\dualVariable}) = \entropyFlux^{\indicesSpace, \almostLegendreTransformedSymbol} (\vectorial{\dualVariable} ) = \frac{1}{2}\matricial{A}^{\indicesSpace}\symetrizer^{-1} \vectorial{\dualVariable} \cdot \vectorial{\dualVariable}.
    \end{equation*}
    Then the relaxation phase $\relaxation{\relaxationParameter = 2}$ conserves the microscopic entropy:
    \begin{equation*}
        \microscopicEntropyLetter (\relaxation{\relaxationParameter = 2}{(\vectorial{\distributionFunctionLetter}_1, \dots, \vectorial{\distributionFunctionLetter}_{\numberVelocities})}) = \microscopicEntropyLetter (\vectorial{\distributionFunctionLetter}_1, \dots, \vectorial{\distributionFunctionLetter}_{\numberVelocities}),
    \end{equation*}
    hence the numerical scheme $\globalScheme$ is entropy preserving.
\end{proposition}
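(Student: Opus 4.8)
The plan is to exploit the two structural consequences of the hypotheses: every kinetic entropy is forced to be a quadratic form, and for $\relaxationParameter = 2$ the relaxation acts as a point reflection about the equilibrium. First I would make the quadratic structure explicit. Since each $\kineticEntropyLegendre{\indicesVelocity}$ is quadratic and convex, write $\kineticEntropyLegendre{\indicesVelocity}(\vectorial{\dualVariable}) = \tfrac{1}{2}\matricial{M}_{\indicesVelocity}\vectorial{\dualVariable}\cdot\vectorial{\dualVariable}$ with $\matricial{M}_{\indicesVelocity}$ symmetric positive definite. The summation constraint $\sum_{\indicesVelocity}\kineticEntropyLegendre{\indicesVelocity} = \legendreTransformed{\entropy} = \tfrac{1}{2}\symetrizer^{-1}\vectorial{\dualVariable}\cdot\vectorial{\dualVariable}$ then gives $\sum_{\indicesVelocity}\matricial{M}_{\indicesVelocity} = \symetrizer^{-1}$. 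By Legendre--Fenchel duality each kinetic entropy is itself quadratic, $\kineticEntropy{\indicesVelocity}(\vectorial{\distributionFunctionLetter}_{\indicesVelocity}) = \tfrac{1}{2}\matricial{M}_{\indicesVelocity}^{-1}\vectorial{\distributionFunctionLetter}_{\indicesVelocity}\cdot\vectorial{\distributionFunctionLetter}_{\indicesVelocity}$, and the entropy-consistent equilibrium, recovered as $\vectorial{\distributionFunctionLetter}^{\atEquilibrium}_{\indicesVelocity}(\vectorial{\conservedMomentsSystemConservationLaws}) = \nabla_{\vectorial{\dualVariable}}\kineticEntropyLegendre{\indicesVelocity}(\vectorial{\dualVariable})$ at the conjugate variable $\vectorial{\dualVariable} = \nabla_{\vectorial{\conservedMomentsSystemConservationLaws}}\entropy = \symetrizer\vectorial{\conservedMomentsSystemConservationLaws}$, reads $\vectorial{\distributionFunctionLetter}^{\atEquilibrium}_{\indicesVelocity}(\vectorial{\conservedMomentsSystemConservationLaws}) = \matricial{M}_{\indicesVelocity}\symetrizer\vectorial{\conservedMomentsSystemConservationLaws}$; the relation $\sum_{\indicesVelocity}\matricial{M}_{\indicesVelocity} = \symetrizer^{-1}$ reproduces the first compatibility constraint $\sum_{\indicesVelocity}\vectorial{\distributionFunctionLetter}^{\atEquilibrium}_{\indicesVelocity} = \vectorial{\conservedMomentsSystemConservationLaws}$.

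The geometric heart of the argument is that, by \eqref{eq:relaxationDiscretizationOmega} with $\relaxationParameter = 2$, the relaxation is the affine involution $\vectorial{\distributionFunctionLetter}_{\indicesVelocity}\mapsto 2\vectorial{\distributionFunctionLetter}^{\atEquilibrium}_{\indicesVelocity}(\vectorial{\conservedMomentsSystemConservationLaws})-\vectorial{\distributionFunctionLetter}_{\indicesVelocity}$, that is, the point reflection about the equilibrium. Since it leaves $\vectorial{\conservedMomentsSystemConservationLaws} = \sum_{\indicesVelocity}\vectorial{\distributionFunctionLetter}_{\indicesVelocity}$ unchanged, it maps the affine set $\{\sum_{\indicesVelocity}\vectorial{\distributionFunctionLetter}_{\indicesVelocity} = \vectorial{\conservedMomentsSystemConservationLaws}\}$ onto itself and fixes the reflection centre $\vectorial{\distributionFunctionLetter}^{\atEquilibrium}(\vectorial{\conservedMomentsSystemConservationLaws})$. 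On this set $\microscopicEntropyLetter = \sum_{\indicesVelocity}\kineticEntropy{\indicesVelocity}$ is a quadratic functional whose global minimiser is precisely the equilibrium, by the minimisation principle \eqref{eq:entropyDefinitionMinimization}. A quadratic function is symmetric about its vertex, i.e. it takes equal values at $\vectorial{\distributionFunctionLetter}$ and at the reflected point $2\vectorial{\distributionFunctionLetter}^{\atEquilibrium}-\vectorial{\distributionFunctionLetter}$, so $\microscopicEntropyLetter$ is conserved by $\relaxation{\relaxationParameter = 2}$.

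If one prefers an explicit check over the geometric picture, I would instead expand $\kineticEntropy{\indicesVelocity}(2\vectorial{\distributionFunctionLetter}^{\atEquilibrium}_{\indicesVelocity}-\vectorial{\distributionFunctionLetter}_{\indicesVelocity})$ using the symmetry of $\matricial{M}_{\indicesVelocity}^{-1}$ and $\matricial{M}_{\indicesVelocity}^{-1}\vectorial{\distributionFunctionLetter}^{\atEquilibrium}_{\indicesVelocity} = \symetrizer\vectorial{\conservedMomentsSystemConservationLaws}$, obtaining $\kineticEntropy{\indicesVelocity}(\vectorial{\distributionFunctionLetter}_{\indicesVelocity}) - 2\vectorial{\distributionFunctionLetter}_{\indicesVelocity}\cdot\symetrizer\vectorial{\conservedMomentsSystemConservationLaws} + 2\symetrizer\vectorial{\conservedMomentsSystemConservationLaws}\cdot\matricial{M}_{\indicesVelocity}\symetrizer\vectorial{\conservedMomentsSystemConservationLaws}$. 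Summing over $\indicesVelocity$ and invoking $\sum_{\indicesVelocity}\vectorial{\distributionFunctionLetter}_{\indicesVelocity} = \vectorial{\conservedMomentsSystemConservationLaws}$ together with $\sum_{\indicesVelocity}\matricial{M}_{\indicesVelocity} = \symetrizer^{-1}$ makes the two correction terms collapse to $-2\vectorial{\conservedMomentsSystemConservationLaws}\cdot\symetrizer\vectorial{\conservedMomentsSystemConservationLaws} + 2\vectorial{\conservedMomentsSystemConservationLaws}\cdot\symetrizer\vectorial{\conservedMomentsSystemConservationLaws} = 0$, leaving $\microscopicEntropyLetter$ unchanged. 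I expect the only genuinely delicate point to be handling the quadratic forms with the correct inverses and recognising that the constraint $\sum_{\indicesVelocity}\matricial{M}_{\indicesVelocity} = \symetrizer^{-1}$ — equivalently the optimality of the equilibrium in \eqref{eq:entropyDefinitionMinimization} — is exactly what forces the cancellation; the rest is bookkeeping.

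Finally, I would conclude globally. The total microscopic entropy $\sum_{\vectorial{\spaceVariable}\in\spaceLattice{\spatialDimensionality}}\microscopicEntropyLetter$ is conserved by the transport phase $\transport$ — this is the computation carried out immediately before the statement — and, by the above, conserved pointwise, hence globally, by every relaxation $\relaxation{\relaxationParameter = 2}$. Since $\globalScheme$ defined in \eqref{eq:splittingToUse} is a composition of such transport and relaxation operators, it conserves the total microscopic entropy, that is, it is entropy preserving.
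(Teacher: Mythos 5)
Your proof is correct and rests on exactly the same observation as the paper's: under the stated hypotheses the microscopic entropy $\microscopicEntropyLetter$ is a quadratic function whose constrained minimizer on the affine set $\{\sum_{\indicesVelocity}\vectorial{\distributionFunctionLetter}_{\indicesVelocity}=\vectorial{\conservedMomentsSystemConservationLaws}\}$ is the equilibrium, while $\relaxation{\relaxationParameter=2}$ is the point reflection about that equilibrium within this set, so the value of $\microscopicEntropyLetter$ is unchanged. Your explicit expansion of $\kineticEntropy{\indicesVelocity}(2\vectorial{\distributionFunctionLetter}^{\atEquilibrium}_{\indicesVelocity}-\vectorial{\distributionFunctionLetter}_{\indicesVelocity})$ and the closing remark on the transport phase merely spell out details the paper leaves implicit.
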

\begin{proof}
    The given quadratic entropy-entropy flux are natural in the sense that 
    \begin{multline*}
        \symetrizer\partial_{\timeVariable} \vectorial{\conservedMomentsSystemConservationLaws} + \sum_{\indicesSpace =  1}^{\spatialDimensionality} \symetrizer \matricial{A}^{\indicesSpace} \partial_{\spaceVariable_{\indicesSpace}} \vectorial{\conservedMomentsSystemConservationLaws}  = 0, \qquad \text{then} \\
         \partial_{\timeVariable} (\symetrizer\vectorial{\conservedMomentsSystemConservationLaws}) \cdot \vectorial{\conservedMomentsSystemConservationLaws} + \sum_{\indicesSpace =  1}^{\spatialDimensionality}  \partial_{\spaceVariable_{\indicesSpace}} (\symetrizer \matricial{A}^{\indicesSpace}\vectorial{\conservedMomentsSystemConservationLaws} )\cdot \vectorial{\conservedMomentsSystemConservationLaws} = \partial_{\timeVariable} \bigl (\tfrac{1}{2}\symetrizer\vectorial{\conservedMomentsSystemConservationLaws}  \cdot \vectorial{\conservedMomentsSystemConservationLaws} \bigr )+ \sum_{\indicesSpace =  1}^{\spatialDimensionality}  \partial_{\spaceVariable_{\indicesSpace}} \bigl (\tfrac{1}{2}\symetrizer \matricial{A}^{\indicesSpace}\vectorial{\conservedMomentsSystemConservationLaws} \cdot \vectorial{\conservedMomentsSystemConservationLaws}\bigr )   = 0.
    \end{multline*}
    The dual entropy and entropy flux can be easily calculated and give the expected constraint on the dual kinetic entropies.
    By assumption, the microscopic entropy $\microscopicEntropyLetter(\vectorial{\distributionFunctionLetter}_{1}, \dots, \vectorial{\distributionFunctionLetter}_{\numberVelocities}) =  \sum_{\indicesVelocity = 1}^{\indicesVelocity = \numberVelocities} \kineticEntropy{\indicesVelocity}(\vectorial{\distributionFunctionLetter}_{\indicesVelocity})$ is a quadratic function in each argument.
    Its minimum under the conservation constraint is given by the equilibria, according to \eqref{eq:entropyDefinitionMinimization}.
    The relaxation $\relaxation{\relaxationParameter = 2}$ given by \eqref{eq:relaxationDiscretization} is nothing but a reflection with respect to the equilibrium and because $\microscopicEntropyLetter$ and its isolines respect this symmetry, the post-relaxation distribution functions yield the same value of $\microscopicEntropyLetter$, concluding the proof.
\end{proof}

\newcommand{\microscopicEntropyImbalance}[2]{\Delta \microscopicEntropyLetter_{(#1)}(#2)}
\newcommand{\microscopicEntropyImbalanceNew}[2]{\Delta \tilde{\microscopicEntropyLetter}_{(#1)}(#2)}

\usetikzlibrary{calc}
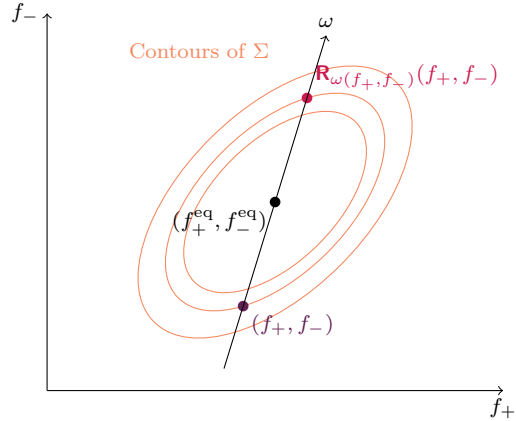
\begin{figure}[htbp]
    \begin{center}
        \begin{footnotesize}
            \begin{tikzpicture}
                \draw[->] (0,0) -- (6,0) node[below] {$\distributionFunctionLetter_+$};
                
                \draw[->] (0,0) -- (0,5) node[left] {$\distributionFunctionLetter_-$};
    
                \draw[color=myOrange, rotate around={45:(3,2.5)}] (3,2.5) ellipse (1.5*1.5 and 1.5*0.8);
                \draw[color=myOrange, rotate around={45:(3,2.5)}] (3,2.5) ellipse (1.2*1.5 and 1.2*0.8);
                \draw[color=myOrange, rotate around={45:(3,2.5)}] (3,2.5) ellipse (1*1.5 and 1*0.8);

                \node[text=myOrange] at (2., 4.5) {Contours of $\microscopicEntropyLetter$};
    
                \coordinate (A) at ($(3,2.5)+(45:1.2*1.5 and 1.2*0.8)$);
                \coordinate(A1)at([rotate around={45:(3,2.5)}]A);
                \fill[myRed](A1) circle (2pt) node[above right]{$\relaxation{\relaxationParameter(\distributionFunctionLetter_+, \distributionFunctionLetter_-)}(\distributionFunctionLetter_+, \distributionFunctionLetter_-)$};
    
                \coordinate (B) at ($(3,2.5)+(-135:1.2*1.5 and 1.2*0.8)$);
                \coordinate(B1)at([rotate around={45:(3,2.5)}]B);
                \fill[color=myViolet](B1) circle (2pt) node[below right]{$(\distributionFunctionLetter_+, \distributionFunctionLetter_-)$};
    
                \fill[black](3,2.5) circle (2pt) node[below left] {$(\distributionFunctionLetter_+^{\atEquilibrium}, \distributionFunctionLetter_-^{\atEquilibrium})$};
    
                \draw[<-] ( $ (A1)!-0.3!(B1) $ ) -- ( $ (A1)!1.3!(B1) $ ) node[pos=0, above] {$\relaxationParameter$};
            \end{tikzpicture}
        \end{footnotesize}
    \end{center}\caption{\label{fig:variableOmegaNew}Idea---illustrated in the case of a two-velocities scheme---behind the procedure selecting a variable relaxation parameter $\relaxationParameter = \relaxationParameter(\distributionFunctionLetter_+, \distributionFunctionLetter_-)$, in order to make the pre and the post-relaxation datum lay on the same level-set of the microscopic entropy. Notice that the equilibrium is a minimizer.}
\end{figure}

However, we are interested in non-linear problems, where is it no longer true that $\relaxation{\relaxationParameter = 2}$ preserves the microscopic entropy.
Our discussion is inspired by \cite{hosseini2023entropic, brownlee2007stability, atif2017essentially}, who however employ a Boltzmann logarithmic entropy of the form $\sum_{\indicesVelocity = 1}^{\indicesVelocity = \numberVelocities}\distributionFunctionLetter_{\indicesVelocity} \log{(\distributionFunctionLetter_{\indicesVelocity} /\omega_{\indicesVelocity})}$, where $\omega_{\indicesVelocity}$ are positive weights, instead of the microscopic entropy $\microscopicEntropyLetter$.
Notice that the Boltzmann logarithmic entropy is well-defined as long as the distribution functions are positive, which is however almost never the case in practice, for they are merely ``numerical'' variables.

The microscopic entropy imbalance through relaxation, very similar to \cite[Equation (4)]{atif2017essentially}, reads 
\begin{equation*} 
    \microscopicEntropyImbalance{\vectorial{\distributionFunctionLetter}_1, \dots, \vectorial{\distributionFunctionLetter}_{\numberVelocities}}{\relaxationParameter} = \microscopicEntropyLetter(\relaxation{\relaxationParameter} {(\vectorial{\distributionFunctionLetter}_1, \dots, \vectorial{\distributionFunctionLetter}_{\numberVelocities})}) - \microscopicEntropyLetter(\vectorial{\distributionFunctionLetter}_1, \dots, \vectorial{\distributionFunctionLetter}_{\numberVelocities}).
\end{equation*}
At each time the relaxation operator is used, for every $\vectorial{\spaceVariable} \in \spaceLattice{\spatialDimensionality}$ and thus for every $\vectorial{\distributionFunctionLetter}_1(\vectorial{\spaceVariable}), \dots, \vectorial{\distributionFunctionLetter}_{\numberVelocities}(\vectorial{\spaceVariable})$, we solve the problem 
\begin{equation}\label{eq:variableOmegaProblem}
    \text{find} \quad \relaxationParameter = \relaxationParameter(\vectorial{\distributionFunctionLetter}_1, \dots, \vectorial{\distributionFunctionLetter}_{\numberVelocities}) \quad \text{such that} \quad \microscopicEntropyImbalance{\vectorial{\distributionFunctionLetter}_1, \dots, \vectorial{\distributionFunctionLetter}_{\numberVelocities}}{\relaxationParameter} = 0,
\end{equation}
and then relax using $\relaxation{\relaxationParameter(\vectorial{\distributionFunctionLetter}_1, \dots, \vectorial{\distributionFunctionLetter}_{\numberVelocities})}$, see \Cref{fig:variableOmegaNew}.
We observe that---even this is practically what happens---there is no guarantee that \eqref{eq:variableOmegaProblem} admits a solution.
This highly depends on the structure of the underlying problem and the chosen entropies.
Notice, and this is very important, that the new relaxation operator is an involution, namely
\begin{equation*}
    \relaxation{\relaxationParameter( \relaxation{\relaxationParameter(\vectorial{\distributionFunctionLetter}_1, \dots, \vectorial{\distributionFunctionLetter}_{\numberVelocities})} (\vectorial{\distributionFunctionLetter}_1, \dots, \vectorial{\distributionFunctionLetter}_{\numberVelocities}))}  \relaxation{\relaxationParameter(\vectorial{\distributionFunctionLetter}_1, \dots, \vectorial{\distributionFunctionLetter}_{\numberVelocities})} (\vectorial{\distributionFunctionLetter}_1, \dots, \vectorial{\distributionFunctionLetter}_{\numberVelocities}) = (\vectorial{\distributionFunctionLetter}_1, \dots, \vectorial{\distributionFunctionLetter}_{\numberVelocities}),
\end{equation*}
which can be seen by looking at \Cref{fig:variableOmegaNew}.
The meaning of this formula is the following: from given distribution functions $(\vectorial{\distributionFunctionLetter}_1, \dots, \vectorial{\distributionFunctionLetter}_{\numberVelocities})$, one computes the non-linear $\relaxationParameter(\vectorial{\distributionFunctionLetter}_1, \dots, \vectorial{\distributionFunctionLetter}_{\numberVelocities})$ by \eqref{eq:variableOmegaProblem}, and relaxes with this rate. Then, the result of this relaxation feeds \eqref{eq:variableOmegaProblem} once again, and a second relaxation with this new rate $\relaxationParameter( \relaxation{\relaxationParameter(\vectorial{\distributionFunctionLetter}_1, \dots, \vectorial{\distributionFunctionLetter}_{\numberVelocities})} (\vectorial{\distributionFunctionLetter}_1, \dots, \vectorial{\distributionFunctionLetter}_{\numberVelocities}))$ is performed. Eventually, the distribution functions go back to their original value $(\vectorial{\distributionFunctionLetter}_1, \dots, \vectorial{\distributionFunctionLetter}_{\numberVelocities})$.
This guarantees that the overall scheme retainsfourth-order accuracy, because the involution property ensures that the basic brick by \eqref{eq:basicBrickSplitting} is time-symmetric, both in the case $\relaxationParameter = 2$ and when $\relaxationParameter = \relaxationParameter(\vectorial{\distributionFunctionLetter}_1, \dots, \vectorial{\distributionFunctionLetter}_{\numberVelocities})$ adapted by \eqref{eq:variableOmegaProblem}.

Finally, we emphasize the fact that enforcing conservation of the microscopic entropy guarantees that the entropy $\entropy$ inside the domain decreases with respect to its initial value during the simulation.
In particular, using \eqref{eq:entropyDefinitionMinimization}, we have:
\begin{align*}
    \sum_{\vectorial{\spaceVariable} \in \spaceLattice{\spatialDimensionality}}\entropy(\vectorial{\conservedMomentsSystemConservationLaws}(\timeVariable, \vectorial{\spaceVariable})) &= \sum_{\vectorial{\spaceVariable} \in \spaceLattice{\spatialDimensionality}} \min_{\vectorial{\conservedMomentsSystemConservationLaws}(\timeVariable, \vectorial{\spaceVariable}) = \sum_{\indicesVelocity = 1}^{\indicesVelocity = \numberVelocities} \vectorial{\distributionFunctionLetter}_{\indicesVelocity}} \microscopicEntropyLetter(\vectorial{\distributionFunctionLetter}_{1}, \dots, \vectorial{\distributionFunctionLetter}_{\numberVelocities}) \\
    &\leq  \sum_{\vectorial{\spaceVariable} \in \spaceLattice{\spatialDimensionality}} \microscopicEntropyLetter(\vectorial{\distributionFunctionLetter}_{1}(\timeVariable, \vectorial{\spaceVariable}), \dots, \vectorial{\distributionFunctionLetter}_{\numberVelocities}(\timeVariable, \vectorial{\spaceVariable})) \\
    &=\sum_{\vectorial{\spaceVariable} \in \spaceLattice{\spatialDimensionality}} \microscopicEntropyLetter(\vectorial{\distributionFunctionLetter}_{1}(0, \vectorial{\spaceVariable}), \dots, \vectorial{\distributionFunctionLetter}_{\numberVelocities}(0, \vectorial{\spaceVariable}))\\
    &=\sum_{\vectorial{\spaceVariable} \in \spaceLattice{\spatialDimensionality}} \microscopicEntropyLetter(\vectorial{\distributionFunctionLetter}_{1}^{\atEquilibrium}(\vectorial{\conservedMomentsSystemConservationLaws}(0, \vectorial{\spaceVariable})), \dots, \vectorial{\distributionFunctionLetter}_{\numberVelocities}^{\atEquilibrium}(\vectorial{\conservedMomentsSystemConservationLaws}(0, \vectorial{\spaceVariable}))) = \sum_{\vectorial{\spaceVariable} \in \spaceLattice{\spatialDimensionality}}\entropy(\vectorial{\conservedMomentsSystemConservationLaws}(0, \vectorial{\spaceVariable})),
\end{align*}
where the last but one equality is valid upon selecting the initial datum at equilibrium, which is the most common choice.

\section{Numerical experiments: Entropy conservation}\label{sec:experimentsEntropy}

The purpose of the numerical experiments in this section is two-fold.
On the one hand, we would like to highlight the importance of the procedure presented in \Cref{sec:entropyStability} to ensure stability.
On the other hand, we want to check that the numerical scheme retains fourth-order accuracy as claimed.

\subsection{Non-linear scalar problem: Burgers' equation in 1D}

We start by considering the Burgers' equation.
We employ the very same setting as \Cref{sec:burgers}, except for the choice $\kineticVelocity = 10$ as far as the kinetic velocity is concerned.
This is done in order to avoid violating the sub-characteristic condition when the first oscillations occur, which would of course drive the simulation to instability and would make the entropy correction unavailable due to the lack of convexity of the kinetic entropies. 
We consider a computational domain made up of 200 points with periodic boundary conditions.
The result is in \Cref{fig:d1q2_n_4_burgers_entropy_correction}, where we use a dichotomy to solve \eqref{eq:variableOmegaProblem} at each point of the lattice.
We see that the simulation which always employs $\relaxation{\relaxationParameter = 2}$ leads to instabilities (we stopped plotting the values when they strongly diverge), whereas the one where $\relaxationParameter$ is adapted using \eqref{eq:variableOmegaProblem} remains stable as claimed.
This is due to the fact that once the shock is formed, the oscillations grow if no entropy correction is used, until some point where the sub-characteristic condition is violated, and instabilities savagely develop.
Furthermore, one sees that when $\relaxationParameter = 2$, the total microscopic entropy steadily increases in time, causing the eventual instability.

\begin{figure}[htbp]
    \begin{center}
        \includegraphics[width = 0.99\textwidth]{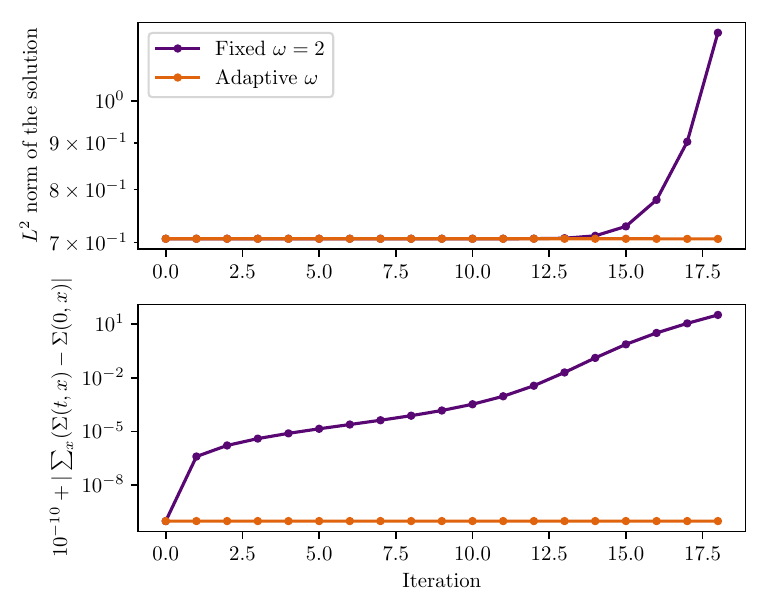}
    \end{center}\caption{\label{fig:d1q2_n_4_burgers_entropy_correction}Norm of the solution (top) and difference between the total microscopic entropy at time zero and eventually in time (bottom), when simulating the Burgers' equation with and without entropy conservation during the relaxation. In the bottom plot, we add $10^{-10}$ to avoid taking the logarithm of zero.}
\end{figure}

Let us point out a practical yet fundamental point on the computation of a solution to \eqref{eq:variableOmegaProblem}.
Indeed, we can have $\vectorial{\distributionFunctionLetter}_{\indicesVelocity} \approx \vectorial{\distributionFunctionLetter}_{\indicesVelocity}^{\atEquilibrium}$, thus making $\microscopicEntropyImbalance{\vectorial{\distributionFunctionLetter}_1, \dots, \vectorial{\distributionFunctionLetter}_{\numberVelocities}}{\relaxationParameter}$ quite close to zero with almost zero derivative in $\relaxationParameter$. 
This frequently cause issues when iterative methods (Newton's, dichotomy, \emph{etc.}) are employed to solve \eqref{eq:variableOmegaProblem}.
The idea is to factorize the distance from the equilibrium in the problem: $\microscopicEntropyImbalance{{\distributionFunctionLetter}_+, {\distributionFunctionLetter}_-}{\relaxationParameter} = (\distributionFunctionLetter_+ - \distributionFunctionLetter_+^{\atEquilibrium})\microscopicEntropyImbalanceNew{{\distributionFunctionLetter}_+, {\distributionFunctionLetter}_-}{\relaxationParameter} = 0$, so that one eventually solves $\microscopicEntropyImbalanceNew{{\distributionFunctionLetter}_+, {\distributionFunctionLetter}_-}{\relaxationParameter} = 0$.

\begin{table}[htbp]
    \caption{\label{tab:convergenceBurgersEntropyCorrection}Errors and order of convergence in the $L^2$ norm for the Burgers' equation using \eqref{eq:variableOmegaProblem}.}
    \setlength{\tabcolsep}{2.5pt}
        \begin{center}
            \begin{scriptsize}
                \begin{tabular}{|c|cc|}
                    \hline
                    $\spaceStep$ & $L^2$ error & Order \\
                    \hline
                    2.000E-03 &		3.725E-06 &	    \\
                    1.250E-03 &		1.764E-06 &	1.59\\
                    7.813E-04 &		1.965E-07 &	4.67\\
                    4.883E-04 &		3.826E-08 &	3.48\\
                    3.053E-04 &		5.898E-09 &	3.98\\
                    1.908E-04 &		9.914E-10 &	3.79\\
                    1.193E-04 &		1.390E-10 &	4.18\\
                    7.454E-05 &		2.410E-11 &	3.73\\
                    \hline
                \end{tabular}
            \end{scriptsize}
        \end{center}
\end{table}

Finally, we check that the entropy conservation procedure \eqref{eq:variableOmegaProblem} does not change the fourth-order convergence of the method.
We operate in the very same setting as in \Cref{sec:burgers}, also re-establishing $\kineticVelocity = 1.2$.
The results in \Cref{tab:convergenceBurgersEntropyCorrection} confirm that no order reduction is experienced and the scheme retains fourth-order accuracy, as claimed.

\subsection{Non-linear system: Shallow water system in 1D}

\begin{figure}[htbp]
    \begin{center}
        \includegraphics[width = 0.99\textwidth]{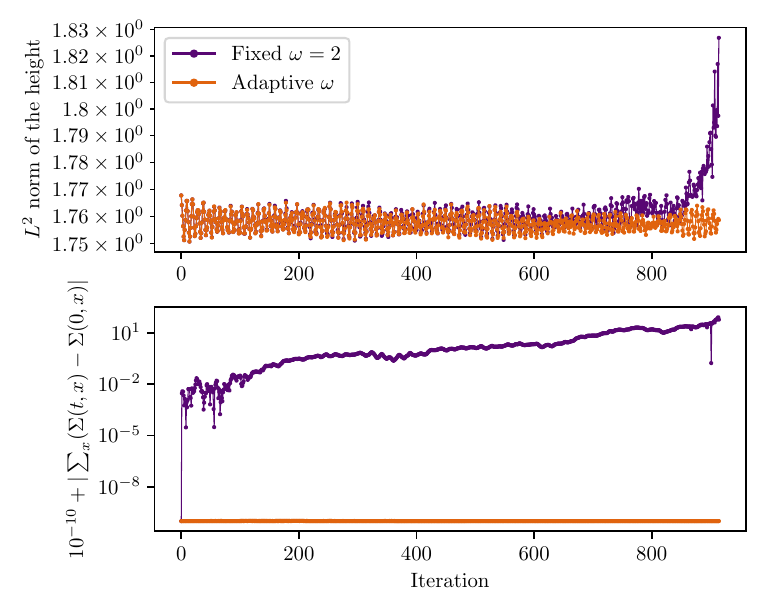}
    \end{center}\caption{\label{fig:shallow_water_explosion}Norm of the solution (height, top) and difference between the total microscopic entropy at time zero and eventually in time (bottom), when simulating the shallow water equations with and without entropy conservation during the relaxation. In the bottom plot, we add $10^{-10}$ to avoid taking the logarithm of zero.}
\end{figure}

For testing the entropy correction on the shallow water system with gravity $\gravityShallowWater = 1$, we take the initial datum 
\begin{equation*}
    (\heightShallowWater, \velocityShallowWater)(\timeVariable = 0, \spaceVariable) = 
    \begin{cases}
        (2, 0), \qquad &\spaceVariable < 1/2, \\
        (3/2, 0), \qquad &\spaceVariable \geq 1/2,
    \end{cases}
\end{equation*}
and the kinetic velocity $\kineticVelocity = 6$ with a spatial grid made up of 100 points.
The results are given in \Cref{fig:shallow_water_explosion}, where problem \eqref{eq:variableOmegaProblem} has been solved using a \emph{quasi}-Newton's method.
This confirms the stabilizing power of the entropy conservation procedure and highlights---once again---that the growth of the total microscopic entropy makes solutions eventually diverge in time.

\section{Variations on the numerical scheme and additional numerical experiments}\label{sec:Variations}

We now propose variations on the basic fourth-order numerical scheme that we have proposed---whose interest is justified---and additional numerical experiments.

\subsection{Projections on the equilibrium}

We consider different schemes---where we introduce projections on the equilibrium at different stages.
This way of proceeding can be used to reduce oscillations and enhance stability when shocks form.
Somehow, these projections can help the numerical scheme to decrease entropy.
Before proceeding, notice that $\relaxation{\relaxationParameter = 1}$ is the projection on the equilibrium.
The name ``projection'' perfectly fits for $\relaxation{\relaxationParameter = 1} \relaxation{\relaxationParameter = 1} = \relaxation{\relaxationParameter = 1}$.
We consider

\begin{align*}
    &\text{Scheme (I)} \qquad 
    &&\begin{cases}
        \globalScheme(\timeStep) \text{ by \eqref{eq:splittingToUse}}, \\
        \basicBrickSolver(\timeStep) \text{ by \eqref{eq:basicBrickSplitting}}. 
    \end{cases}\\
    &\text{Scheme (II)} \qquad 
    &&\begin{cases}
        \globalScheme(\timeStep) = \relaxation{\relaxationParameter = 1}\basicBrickSolver\bigl (\frac{\timeStep}{6}\bigr )^4\basicBrickSolver\bigl (-\frac{\timeStep}{3} \bigr ) \basicBrickSolver\bigl (\frac{\timeStep}{6}\bigr )^4, \\
        \basicBrickSolver(\timeStep) \text{ by \eqref{eq:basicBrickSplitting}}.
    \end{cases}\\
    &\text{Scheme (III)} \qquad 
    &&\begin{cases}
        \globalScheme(\timeStep) = \bigl ( \relaxation{\relaxationParameter = 1} \basicBrickSolver\bigl (\frac{\timeStep}{6}\bigr )\bigl )^4 \relaxation{\relaxationParameter = 1}\basicBrickSolver\bigl (-\frac{\timeStep}{3} \bigr ) \bigl ( \relaxation{\relaxationParameter = 1} \basicBrickSolver\bigl (\frac{\timeStep}{6}\bigr )\bigl )^4, \\
        \basicBrickSolver(\timeStep) \text{ by \eqref{eq:basicBrickSplitting}}.
    \end{cases}\\
    &\text{Scheme (IV)} \qquad 
    &&\begin{cases}
        \globalScheme(\timeStep) \text{ by \eqref{eq:splittingToUse}}, \\
        \basicBrickSolver(\timeStep) = \relaxation{\relaxationParameter = 1}\transport \bigl ( \frac{\splittingTimeStep}{4}\bigr ) \relaxation{\relaxationParameter = 2} \transport \bigl ( \frac{\splittingTimeStep}{4}\bigr ) \relaxation{\relaxationParameter = 1}\transport \bigl ( \frac{\splittingTimeStep}{4}\bigr ) \relaxation{\relaxationParameter = 2}  \transport \bigl ( \frac{\splittingTimeStep}{4}\bigr ).
    \end{cases}
\end{align*}
Let us briefly comment on these schemes.
The first one is the original fourth order scheme we have proposed.
For Scheme (I), (II), and (III), the basic brick $\basicBrickSolver$ is left unchanged.
Scheme (II) just performs a projection on the equilibrium at the end of each fourth-order solver.
Scheme (III) does so after each employ of the basic brick $\basicBrickSolver$.
Finally, Scheme (IV) acts in a radically different fashion, for it combines a modified basic brick $\basicBrickSolver$ with the same composition. The basic brick $\basicBrickSolver$ is modified as a pairing of two Strang formul\ae{} followed by projections on the equilibrium.

\subsection{Non-linear scalar problem: Burgers' equation in 1D}

\begin{table}[htbp]
    \caption{\label{tab:convergenceBurgers}Errors and order of convergence in the $L^2$ norm for the Burgers' equation using different variation on our new fourth-order scheme.}
    \setlength{\tabcolsep}{2.5pt}
        \begin{center}
            \begin{scriptsize}
                \begin{tabular}{|c|cc|cc|cc|cc|}
                    \hline
                    & \multicolumn{2}{c|}{Scheme (I)} & \multicolumn{2}{c|}{Scheme (II)} & \multicolumn{2}{c|}{Scheme (III)} & \multicolumn{2}{c|}{Scheme (IV)} \\
                    \hline
                    $\spaceStep$ & $L^2$ error & Order & $L^2$ error & Order & $L^2$ error & Order  & $L^2$ error & Order \\
                    \hline 
                    \hline
                    \multicolumn{9}{|c|}{Initial datum at equilibrium}\\
                    \hline
                    \hline
                    2.000E-03 & 3.370E-06 & 	     & 3.374E-06 & 	     & 3.246E-06 & 	     & 1.147E-05&	    \\ 
                    1.250E-03 & 1.552E-06 & 	1.65 & 1.551E-06 & 	1.65 & 1.476E-06 & 	1.68 & 5.686E-06&	1.49\\ 
                    7.813E-04 & 1.742E-07 & 	4.65 & 1.742E-07 & 	4.65 & 1.677E-07 & 	4.63 & 1.193E-06&	3.32\\ 
                    4.883E-04 & 3.365E-08 & 	3.50 & 3.365E-08 & 	3.50 & 3.275E-08 & 	3.47 & 3.471E-07&	2.63\\ 
                    3.053E-04 & 5.184E-09 & 	3.98 & 5.184E-09 & 	3.98 & 5.091E-09 & 	3.96 & 8.691E-08&	2.95\\ 
                    1.908E-04 & 8.688E-10 & 	3.80 & 8.688E-10 & 	3.80 & 8.586E-10 & 	3.79 & 2.283E-08&	2.84\\ 
                    1.193E-04 & 1.221E-10 & 	4.18 & 1.221E-10 & 	4.18 & 1.212E-10 & 	4.17 & 5.327E-09&	3.10\\ 
                    7.454E-05 & 2.109E-11 & 	3.74 & 2.109E-11 & 	3.74 & 2.099E-11 & 	3.73 & 1.418E-09&	2.82\\ 
                    \hline 
                    \hline
                    \multicolumn{9}{|c|}{Initial datum off-equilibrium}\\
                    \hline
                    \hline 
                    2.000E-03 &6.432E-06 &	    & 5.355E-06 &	    & 4.115E-05 &       & 1.653E-04 &	     \\ 	
                    1.250E-03 &1.800E-06 &	2.71& 1.639E-06 &	2.52& 1.069E-05 &	2.87& 6.785E-05 &	1.90 \\ 
                    7.813E-04 &2.182E-07 &	4.49& 1.825E-07 &	4.67& 2.576E-06 &	3.03& 2.579E-05 &	2.06 \\ 
                    4.883E-04 &3.945E-08 &	3.64& 3.425E-08 &	3.56& 6.346E-07 &	2.98& 1.015E-05 &	1.98 \\ 
                    3.053E-04 &6.061E-09 &	3.99& 5.234E-09 &	4.00& 1.551E-07 &	3.00& 3.958E-06 &	2.00 \\ 
                    1.908E-04 &9.954E-10 &	3.84& 8.730E-10 &	3.81& 3.796E-08 &	3.00& 1.551E-06 &	1.99 \\ 
                    1.193E-04 &1.423E-10 &	4.14& 1.225E-10 &	4.18& 9.245E-09 &	3.01& 6.033E-07 &	2.01 \\ 
                    7.454E-05 &2.396E-11 &	3.79& 2.112E-11 &	3.74& 2.264E-09 &	2.99& 2.369E-07 &	1.99 \\ 
                    \hline   
                \end{tabular}
            \end{scriptsize}
        \end{center}
\end{table}

To start testing the four different schemes proposed hitherto, we consider exactly the same setting as \Cref{sec:burgers} and we shall test both by initializing the distribution functions at equilibrium and off-equilibrium.
The results are given in \Cref{tab:convergenceBurgers} and show fourth-order convergence---when the initial datum is taken at equilibrium---for all numerical schemes except the last one, where order is reduced to three because of the projection on the equilibrium in the basic brick $\basicBrickSolver$.
Therefore, it is not advisable to employ Scheme (IV).
Trying not to initialize at equilibrium, by taking $(\distributionFunctionLetter_+, \distributionFunctionLetter_-)(\timeVariable = 0) = (\tfrac{1}{4}, \tfrac{3}{4})\conservedMomentsSystemConservationLaws(\timeVariable = 0)$, we observe fourth-order convergence for the first two schemes, third-order for the third scheme, and second-order for the last one, this phenomenon is explained in what follows.
We deduce that Scheme (III) needs to be used carefully, in particular, initializing at equilibrium.

In this non-linear case, the order reduction induced by the projections on the equilibrium could be seen using the modified equations \cite{warming1974modified,guillon:hal-03986533} but this would lead to very tedious calculations.
Alternatively, this phenomenon can be easily understood in the case of linear transport, where $\fluxSystemConservationLaws(\conservedMomentsSystemConservationLaws) = \transportVelocity \conservedMomentsSystemConservationLaws$, using Fourier analysis \cite{strikwerda2004finite}.
In this case, we can look at the expansions of the two roots $\timeShiftOperator_1$ and $\timeShiftOperator_2$ of $\determinant(\timeShiftOperator\identityOperator - \fourierTransformed{\globalScheme}(\timeStep)(\frequency\spaceStep))$ in the limit $|\frequency\spaceStep| \ll 1$, to theoretically understand the different convergence rates.
This provides
\begin{align*}
    \text{Scheme (I)} &\\
    \timeShiftOperator_1(\frequency\spaceStep) &= e^{-i \transportVelocity  \frequency\timeStep} + \frac{i \transportVelocity}{622080} (24\transportVelocity^4-25\transportVelocity^2\kineticVelocity^2 + \kineticVelocity^4) (\frequency\timeStep)^5 + \bigO{|\frequency\timeStep|^6}, \\
    \timeShiftOperator_2(\frequency\spaceStep) &=e^{i \transportVelocity  \frequency\timeStep} - \frac{i \transportVelocity}{622080} (24\transportVelocity^4-25\transportVelocity^2\kineticVelocity^2 + \kineticVelocity^4) (\frequency\timeStep)^5 + \bigO{|\frequency\timeStep|^6}.
\end{align*}
\vspace{-1cm}
\begin{multline*}
    \text{Scheme (II)} \\
    \timeShiftOperator_1(\frequency\spaceStep) = e^{-i \transportVelocity  \frequency\timeStep} + \frac{i \transportVelocity}{622080} (24\transportVelocity^4-25\transportVelocity^2\kineticVelocity^2 + \kineticVelocity^4) (\frequency\timeStep)^5 + \bigO{|\frequency\timeStep|^6},  \\
    \timeShiftOperator_2(\frequency\spaceStep) = 0.
\end{multline*}
\vspace{-1cm}
\begin{multline*}
    \text{Scheme (III)} \\
    \timeShiftOperator_1(\frequency\spaceStep) = e^{-i \transportVelocity  \frequency\timeStep} + \frac{i \transportVelocity}{622080} (24\transportVelocity^4-25\transportVelocity^2\kineticVelocity^2 + \kineticVelocity^4) (\frequency\timeStep)^5 + \bigO{|\frequency\timeStep|^6}, \\
    \timeShiftOperator_2(\frequency\spaceStep) = 0.
\end{multline*}
\vspace{-1cm}
\begin{multline*}
    \text{Scheme (IV)} \\
    \timeShiftOperator_1(\frequency\spaceStep) = e^{-i \transportVelocity  \frequency\timeStep} + \frac{\transportVelocity^2}{3456}(\transportVelocity^2 - \kineticVelocity^2) (\frequency\timeStep)^4 + \bigO{|\frequency\timeStep|^5}, \\
    \timeShiftOperator_2(\frequency\spaceStep) = 0.
\end{multline*}

We see that only the first scheme allows two discrete modes in the system, because no relaxation on the equilibrium is performed.
One mode is the one carrying the accurate part of the solution, whereas $\timeShiftOperator_2(\frequency\spaceStep)$ corresponds to a parasitic numerical mode which is globally fourth-order accurate with respect to a transport equation with opposite velocity $-\transportVelocity$.
Moreover, as all the leading order reminders vanish whenever $\transportVelocity = \kineticVelocity$ and typically increase with $\kineticVelocity$, these expansions suggest that one should take $\kineticVelocity \greatersim \transportVelocity$ but as close as possible to the velocity of the fastest wave, \confer{} \eqref{eq:choiceKineticVelocity}, in order to minimize the truncation errors.
Notice that in the first three schemes, we could observe fifth-order results provided that $|\transportVelocity| = \sqrt{6}/12 \kineticVelocity < \kineticVelocity$.
Indeed, we have even more: since $\timeShiftOperator_1(\frequency\spaceStep) = e^{-i \transportVelocity  \frequency\timeStep} + \tfrac{i \transportVelocity}{622080} (24\transportVelocity^4-25\transportVelocity^2\kineticVelocity^2 + \kineticVelocity^4) (\frequency\timeStep)^5 + \tfrac{\transportVelocity^2}{622080} (24\transportVelocity^4-25\transportVelocity^2\kineticVelocity^2 + \kineticVelocity^4) (\frequency\timeStep)^6 + \bigO{|\frequency\timeStep|^7}$ in the case of Schemes (I) and (II), the method can be sixth-order and this is what we observe through simulations.
For Scheme (III), we have $\timeShiftOperator_1(\frequency\spaceStep) = e^{-i \transportVelocity  \frequency\timeStep} + \tfrac{i \transportVelocity}{622080} (24\transportVelocity^4-25\transportVelocity^2\kineticVelocity^2 + \kineticVelocity^4) (\frequency\timeStep)^5  + \frac{\transportVelocity^2}{1244160} (63\transportVelocity^4 - 65 \transportVelocity^2 \kineticVelocity^2+ 2\kineticVelocity^4) (\frequency\timeStep)^6 + \bigO{|\frequency\timeStep|^7}$, hence the scheme remains only fifth-order accurate when $|\transportVelocity| = \sqrt{6}/12 \kineticVelocity$.

\begin{table}[htbp]
    \caption{\label{tab:convergenceLinear}Errors and order of convergence in the $L^2$ norm for the transport equation with $|\transportVelocity|  = \sqrt{6}/12 \kineticVelocity$ and initial datum at equilibrium.}
    \setlength{\tabcolsep}{2.5pt}
        \begin{center}
            \begin{scriptsize}
                \begin{tabular}{|c|cc|cc|cc|cc|}
                    \hline
                    & \multicolumn{2}{c|}{Scheme (I)} & \multicolumn{2}{c|}{Scheme (II)} & \multicolumn{2}{c|}{Scheme (III)} & \multicolumn{2}{c|}{Scheme (IV)} \\
                    \hline
                    $\spaceStep$ & $L^2$ error & Order & $L^2$ error & Order & $L^2$ error & Order  & $L^2$ error & Order \\
                    \hline
                    5.000E-02 &		2.283E-02& 	     & 2.283E-02 & 	     & 2.307E-02 &       & 1.591E-01 & 	     \\     	
                    3.125E-02 &		1.890E-03& 	5.30 & 1.890E-03 & 	5.30 & 2.660E-03 &	4.60 & 5.120E-02 & 	2.41 \\
                    1.961E-02 &		1.239E-04& 	5.85 & 1.239E-04 & 	5.85 & 2.662E-04 &	4.94 & 1.321E-02 & 	2.91 \\
                    1.235E-02 &		8.066E-06& 	5.90 & 8.066E-06 & 	5.90 & 2.713E-05 &	4.94 & 3.394E-03 & 	2.94 \\
                    7.752E-03 &		5.040E-07& 	5.96 & 5.040E-07 & 	5.96 & 2.685E-06 &	4.97 & 8.506E-04 & 	2.97 \\
                    4.854E-03 &		3.081E-08& 	5.97 & 3.081E-08 & 	5.97 & 2.616E-07 &	4.98 & 2.111E-04 & 	2.98 \\
                    3.040E-03 &		1.855E-09& 	6.00 & 1.855E-09 & 	6.00 & 2.514E-08 &	5.00 & 5.171E-05 & 	3.00 \\
                    1.901E-03 &		1.111E-10& 	6.00 & 1.111E-10 & 	6.00 & 2.406E-09 &	5.00 & 1.265E-05 & 	3.00 \\
                    \hline
                \end{tabular}
            \end{scriptsize}
        \end{center}
\end{table}

These predictions are actually met by the results of \Cref{tab:convergenceLinear}.
They are obtained exactly in the same setting as for the Burgers's equation, taking a final time $\finalTime = 10$ and quite coarse meshes in order to avoid very small errors below machine epsilon in double precision, since the numerical methods are now extremely accurate.
Of course, this is of limited interest since valid only in the linear setting and does not extend to the case of the Burgers' equation.
However, a similar idea could be utilized in the simulation of low-Mach--flows, where the wave speed is roughly constant in the domain, in order to obtain, if not sixth-order schemes, very accurate fourth-order ones.

To understand the order reductions experienced when the initial datum is not at equilibrium, \confer{} the bottom half of \Cref{tab:convergenceBurgers}, again in the linear setting, we follow the procedure by \cite{bellotti2024initialisation}, which has allowed to explain the behavior of the standard second-order lattice Boltzmann scheme as far as initializations are concerned.
Considering that $(\distributionFunctionLetter_+, \distributionFunctionLetter_-)(\timeVariable=0) = (\theta, 1-\theta)\conservedMomentsSystemConservationLaws(\timeVariable = 0)$ with $\theta \in \reals$, we study the low-frequency limit of $\fourierTransformed{\discrete{g}}(\frequency\spaceStep) = (\transpose{\canonicalBasisVector_{1}}+\transpose{\canonicalBasisVector_{2}})\fourierTransformed{\globalScheme}(\timeStep)(\frequency\spaceStep)\transpose{(\theta, 1-\theta)}$, the amplification factor giving the approximation of the conserved variable $\conservedMomentsSystemConservationLaws$ after one time step, as function of the initial datum of the Cauchy problem.
We have 
\begin{equation*}
    \textnormal{Scheme (I) and (II)} \qquad \fourierTransformed{\discrete{g}}(\frequency\spaceStep) = e^{-i \transportVelocity  \frequency\timeStep}  + \bigO{|\frequency\timeStep|^5},
\end{equation*}
independently of $\theta$, which explains why fourth-order is indeed kept.
For the other schemes
\begin{equation*}
    \textnormal{Scheme (III)} \qquad \fourierTransformed{\discrete{g}}(\frequency\spaceStep) = e^{-i \transportVelocity  \frequency\timeStep} - \frac{i \transportVelocity^2}{1728}(\transportVelocity + \kineticVelocity - 2\kineticVelocity\theta)(\frequency\timeStep)^3  + \bigO{|\frequency\timeStep|^4},
\end{equation*}
hence we understand why we observe third-order convergence except when the initial datum is at equilibrium, that is $\theta = \tfrac{1}{2}(1+\tfrac{\transportVelocity}{\kineticVelocity})$.
Finally, we have
\begin{equation*}
    \textnormal{Scheme (IV)} \qquad \fourierTransformed{\discrete{g}}(\frequency\spaceStep) = e^{-i \transportVelocity  \frequency\timeStep} + \frac{\transportVelocity}{288}(\transportVelocity + \kineticVelocity - 2\kineticVelocity\theta)(\frequency\timeStep)^2  + \bigO{|\frequency\timeStep|^3},
\end{equation*}
yielding the same conclusion at second-order.

\subsection{Solution of the Euler equations in 2D with Riemann problems}

\begin{figure}[htbp]
    \begin{center}
        \includegraphics[width = 0.99\textwidth]{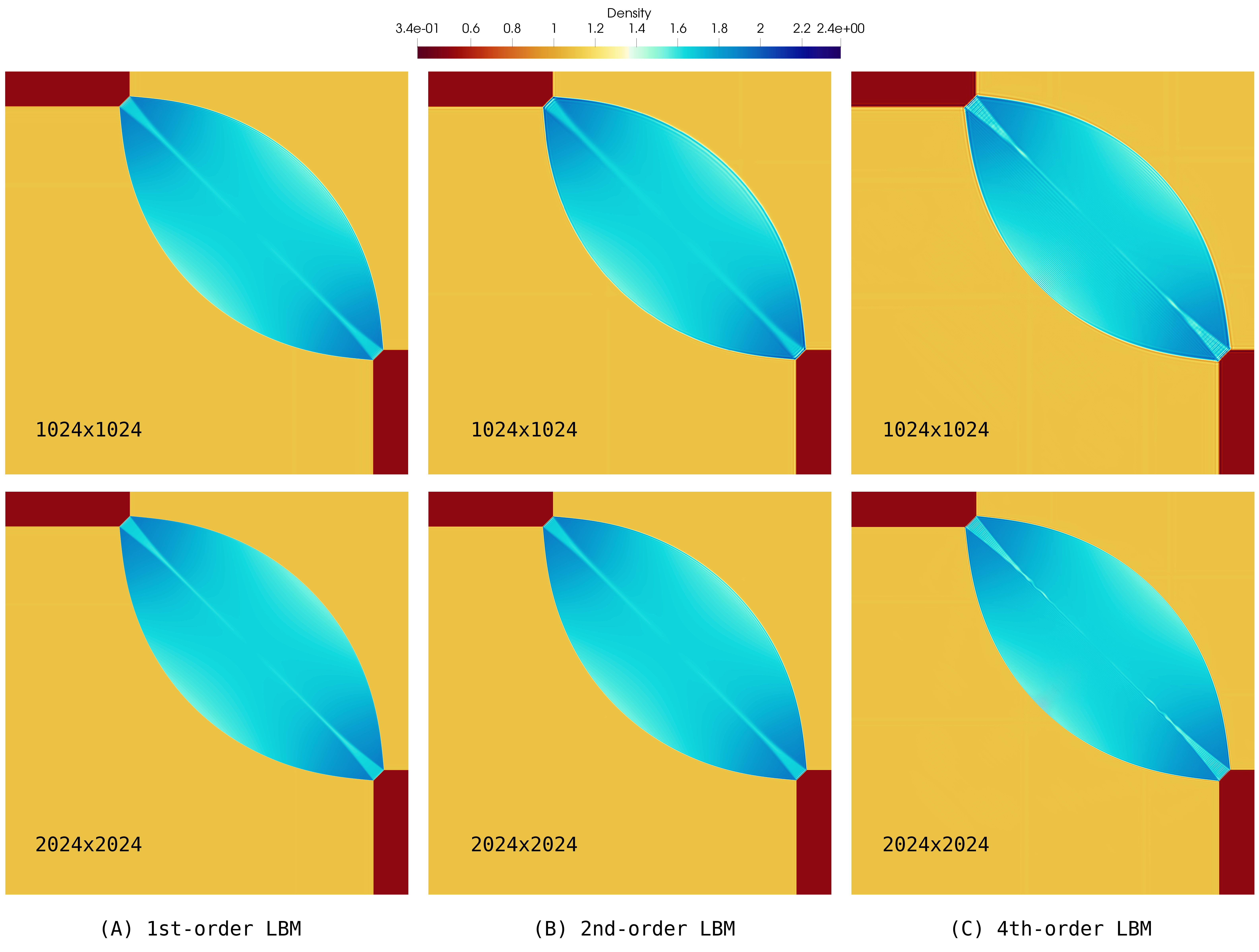}
    \end{center}\caption{\label{fig:euler2D1stround}Densities at final time $\finalTime = 0.25$ with Riemann problems for the Euler equations, using Configuration 4 from \cite{lax1998solution} and employing schemes (A), (B), and (C).}
\end{figure}

We finish the paper on a test concerning the full Euler system in 2D with discontinuous solutions.
Therefore, we have $\spatialDimensionality = 2$, $\numberConservationLaws = 4$ with $\vectorial{\conservedMomentsSystemConservationLaws} = (\conservedMomentsSystemConservationLaws_1, \conservedMomentsSystemConservationLaws_2, \conservedMomentsSystemConservationLaws_3, \conservedMomentsSystemConservationLaws_4) = (\densityEuler, \densityEuler\velocityXEuler, \densityEuler\velocityYEuler, \energyEuler)$ under the fluxes $\vectorial{\fluxSystemConservationLaws}^1(\densityEuler, \velocityXEuler, \velocityYEuler, \pressureEuler) = (\densityEuler\velocityXEuler, \densityEuler\velocityXEuler^2 + \pressureEuler, \densityEuler\velocityXEuler\velocityYEuler, \velocityXEuler(\energyEuler+\pressureEuler))$ and $\vectorial{\fluxSystemConservationLaws}^2(\densityEuler, \velocityXEuler, \velocityYEuler, \pressureEuler) = (\densityEuler\velocityYEuler, \densityEuler\velocityXEuler\velocityYEuler, \densityEuler\velocityYEuler^2 + \pressureEuler, \velocityYEuler(\energyEuler+\pressureEuler))$, where the link between energy $\energyEuler$ and pressure $\pressureEuler$ is given by the polytropic equation of state 
\begin{equation*}
    \energyEuler = \frac{1}{2}\densityEuler (\velocityXEuler^2 + \velocityYEuler^2) + \frac{\pressureEuler}{\gasConstant - 1},
\end{equation*}
with $\gasConstant$ the gas constant. 

We consider an initial datum made up of a Riemann problem with four constant states, given by the configuration 4 from \cite{lax1998solution}, with gas constant $\gasConstant = 1.4$.
The grids are made up of 1024 and 2048 points per direction, with a kinetic velocity $\kineticVelocity = 6.21$.
We would like to compare the second-order standard lattice Boltzmann scheme against our new fourth-order scheme.
In the considered framework, the  standard lattice Boltzmann method $\relaxation{\relaxationParameter}\transport(\timeStep)$ can only be used---for stability reasons---with $\relaxationParameter = 1.93$, hence it is stuck to first-order accuracy, though with little dissipation. We utilize the scheme with $\timeStep = \spaceStep/\kineticVelocity$ and call it scheme (A).
In this case, projecting on the equilibrium at each iteration would result in relaxing with $\relaxationParameter = 1$, which gives extremely diffusive schemes.
We therefore do not make this choice.
To build a second-order stable scheme, which is however not the standard lattice Boltzmann scheme, we consider $\relaxation{\relaxationParameter = 1}\transport \bigl ( \frac{\splittingTimeStep}{4}\bigr ) \relaxation{\relaxationParameter = 2} \transport \bigl ( \frac{\splittingTimeStep}{2}\bigr ) \relaxation{\relaxationParameter = 2}  \transport \bigl ( \frac{\splittingTimeStep}{4}\bigr )$, which boils down to perform one time-step by applying the basic brick \eqref{eq:basicBrickSplitting} plus a relaxation on the equilibrium. We select $\timeStep = 4\spaceStep/\kineticVelocity$ and call the scheme (B).
With our fourth-order scheme, we are able to use $\relaxationParameter = 2$, hence making the scheme fourth-order accurate, provided that we employ Scheme (III).
Indeed, the projection on the equilibrium at each call of the basic brick $\basicBrickSolver$ has a very positive effect on stability.
Notice that one could add shock detection algorithms in order to set $\relaxationParameter = 2$ far from shocks and $\relaxationParameter$ slightly smaller than two, ensuring the elimination of spurious oscillations.
However, as claimed at the very beginning of the manuscript, this is beyond the proof-of-concept status of the paper and shall not be investigated here.
The simulations are not stable when using Scheme (I) and $\relaxationParameter = 2$.

After $\finalTime = 0.25$, the density field is plotted in \Cref{fig:euler2D1stround}.
The results are in agreement with those of \cite{lax1998solution}.
We see that, surprisingly, at the fixed grid sizes that we consider, the lower the order of the scheme, the  sharper the shocks appear on the picture, especially at coarse resolution.
This is due to the (slightly) dissipative character of the scheme of the first-order scheme (A), contrarily to the dispersive nature of the second (B) and fourth-order (C) schemes, which generate visible wiggles around the shocks.
However, this is not very important since, in the case of scalar 1D conservation laws solved with monotone (thus first-order accurate) finite difference schemes, the rate of convergence for solutions with shocks is $\bigO{\sqrt{\spaceStep}}$ in the $L^1$ norm, see \cite{kuznetsov1976accuracy, tang1995sharpness, sabac1997optimal}. 
When the problem is linear, for the same norm, we can expect rates $\bigO{\spaceStep^{2/3}}$ for second-order schemes and $\bigO{\spaceStep^{4/5}}$ for fourth-order ones, see \cite{brenner2006besov}.
Therefore, we cannot hope that either the standard lattice Boltzmann scheme or the 2nd-order scheme beats our fourth-order scheme.
More interestingly, our fourth-order scheme allows observing hydrodynamic instabilities in the smooth area of the flow, along the central axis of the almond-shaped structure. 
This indicates that the underlying numerical scheme is a high-order one, whereas these structures cannot be observed with the standard lattice Boltzmann method and the second-order scheme at the given resolutions.
In terms of computational time on GPUs, the standard lattice Boltzmann algorithm---scheme (A)---took 7.213 seconds to run with 1024 points per direction, and 59.697 seconds with 2048.
Scheme (B) took 6.072 seconds with 1024 and 53.270 seconds with 2048 points.
The fourth-order method---scheme (C)---took 9.470 seconds with 1024  and 64.529 seconds with 2048 points.

\section{Conclusions}\label{sec:conclusions}

In this study, we have introduced a general framework for constructing fourth-order lattice Boltzmann schemes tailored to handle hyperbolic systems of conservation laws as long as their solution remains smooth.
Our procedure relies on time-symmetric operators, combined together to increase the order of the method.
For we employ a kinetic relaxation approximation, we can adjust the kinetic velocities to ensure that the resulting scheme adheres to the lattice Boltzmann method principles. Numerical simulations have been conducted to validate our theoretical findings. Furthermore, we have proposed modifications to the local relaxation phase to maintain entropy stability without compromising the  order of the method.

Future research directions include the development of limitation strategies---both \emph{a priori} and \emph{a posteriori}---for these lattice Boltzmann schemes to effectively address numerical oscillations arising from shocks. Additionally, techniques to ensure positivity, particularly when dealing with shallow water equations, will be of interest.
Finally, exploring methods to further increase the order of the scheme, potentially up to six or beyond, holds promise for enhancing accuracy and computational efficiency.

\section*{Acknowledgments}

This work of the Interdisciplinary Thematic Institute IRMIA++, as part of the ITI 2021-2028 program of the University of Strasbourg, CNRS and Inserm, was supported by IdEx Unistra (ANR-10-IDEX-0002), and by SFRI-STRAT’US project (ANR-20-SFRI-0012) under the framework of the French Investments for the Future Program.

The authors thank the two anonymous referees for their useful suggestions to improve the work.    
T. Bellotti also thanks O. A. Boolakee from ETH Zürich for having pointed out a typo in the first version of the manuscript.

\bibliographystyle{siamplain}
\bibliography{biblio}

\appendix

\section{Trace of the amplification matrix for the linear \lbmSchemeVectorial{1}{2}{} scheme}\label{sec:traceD1Q2}

Introducing $C = \transportVelocity\timeStep/(\numberShiftsTransport\spaceStep)$ and $\mu(\frequency\spaceStep) = \sin(\numberShiftsTransport\frequency\spaceStep) \in [-1, 1]$, we have
    \begin{align*}
        \trace (\fourierTransformed{\globalScheme}(\timeStep)(\frequency\spaceStep)) = 
        -\tfrac{{\left(C^{18} - 576  C^{16}\right)}1}{101559956668416}   \mu(\frequency\spaceStep)^{36} + \tfrac{17{\left(C^{18} - 576  C^{16}\right)}}{203119913336832}   \mu(\frequency\spaceStep)^{34} &\\
        - \tfrac{{\left(8  C^{18} - 4491  C^{16} - 67392  C^{14}\right)}1}{25389989167104}   \mu(\frequency\spaceStep)^{32} + \tfrac{{\left(35  C^{18} - 18414  C^{16} - 1005696  C^{14}\right)}}{50779978334208}   \mu(\frequency\spaceStep)^{30} &\\
        - \tfrac{{\left(49  C^{18} - 22554  C^{16} - 3223152  C^{14} - 24634368  C^{12}\right)} }{50779978334208}  \mu(\frequency\spaceStep)^{28} &\\
        + \tfrac{{\left(91  C^{18} - 31500  C^{16} - 11499408  C^{14} - 315767808  C^{12}\right)} }{101559956668416}  \mu(\frequency\spaceStep)^{26} &\\
        - \tfrac{{\left(14  C^{18} - 2079  C^{16} - 3074112  C^{14} - 213077952  C^{12} - 1108546560  C^{10}\right)} }{25389989167104}  \mu(\frequency\spaceStep)^{24} &\\
        + \tfrac{{\left(11  C^{18} + 2358  C^{16} - 3889944  C^{14} - 623464128  C^{12} - 11824496640  C^{10}\right)}}{50779978334208}   \mu(\frequency\spaceStep)^{22} &\\
        - \tfrac{{\left(5  C^{18} + 4932  C^{16} - 2516832  C^{14} - 1053025920  C^{12} - 51399608832  C^{10} - 203166351360  C^{8}\right)} }{101559956668416}  \mu(\frequency\spaceStep)^{20} &\\
        + \tfrac{{\left(C^{18} + 3384  C^{16} - 204768  C^{14} - 992466432  C^{12} - 115473600000  C^{10} - 1712402104320  C^{8}\right)}}{203119913336832}   \mu(\frequency\spaceStep)^{18} &\\
        - \tfrac{{\left(C^{16} + 792  C^{14} - 494208  C^{12} - 160807680  C^{10} - 6429570048  C^{8} - 20316635136  C^{6}\right)}}{470184984576}   \mu(\frequency\spaceStep)^{16} &\\
        + \tfrac{{\left(3  C^{14} + 8  C^{12} - 871200  C^{10} - 91228032  C^{8} - 1128701952  C^{6}\right)}}{8707129344}   \mu(\frequency\spaceStep)^{14} &\\
        - \tfrac{{\left(3  C^{12} - 1100  C^{10} - 445392  C^{8} - 15894144  C^{6} - 41803776  C^{4}\right)} }{120932352}  \mu(\frequency\spaceStep)^{12}&\\
         + \tfrac{ {\left(11  C^{10} - 9720  C^{8} - 997920  C^{6} - 10450944  C^{4}\right)} }{20155392} \mu(\frequency\spaceStep)^{10} 
         + \tfrac{ {\left(C^{8} + 224  C^{6} + 2496  C^{4} + 13824  C^{2}\right)} }{31104} \mu(\frequency\spaceStep)^{8} &\\
         - \tfrac{{\left(7  C^{6} - 24  C^{4} - 2304  C^{2}\right)} }{2592}  \mu(\frequency\spaceStep)^{6} + \tfrac{{\left(C^{4} - 4  C^{2}\right)} }{12}  \mu(\frequency\spaceStep)^{4} - C^{2} \mu(\frequency\spaceStep)^{2} + 2.
    \end{align*}

\begin{figure}[htbp]
    \begin{center}
        \includegraphics[width = 0.99\textwidth]{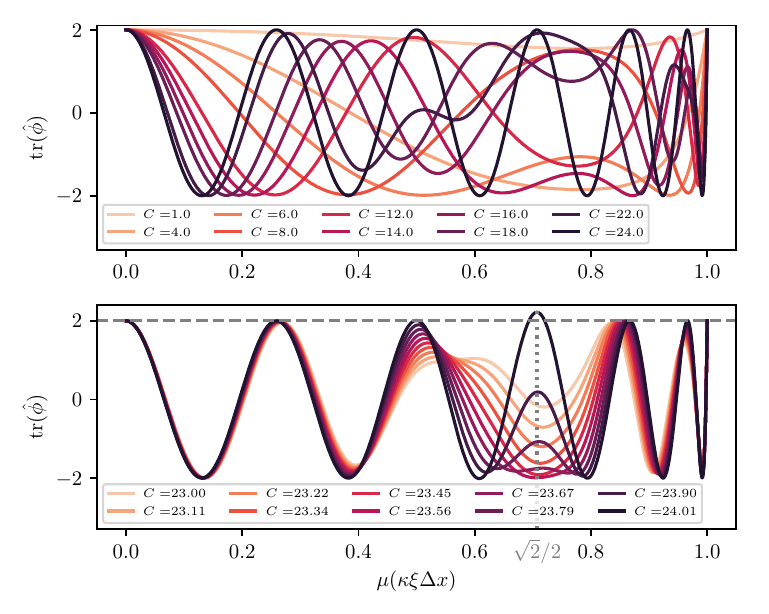}
    \end{center}\caption{\label{fig:traced1q2}Trace of the amplification matrix for the linear \lbmSchemeVectorial{1}{2}{} scheme for different values of $C$.}
\end{figure}

This is an even polynomial of degree 36 to study on a bounded set $[0, 1]$.
Since only even powers of $C$ appear, we can just analyze $C>0$.
From \Cref{fig:traced1q2}, we are confident that the polynomials stay in the band $[-2, 2]$ as long as $C\leq 24$, and they depart from it through a maximum forming at $\mu(\frequency\spaceStep) = \sqrt{2}/2$, namely for $\frequency\spaceStep = \pi/(4\numberShiftsTransport)$.

\end{document}